\newcommand{\blocktheorem}[1]{%
  \csletcs{old#1}{#1}% Store \begin
  \csletcs{endold#1}{end#1}% Store \end
  \RenewDocumentEnvironment{#1}{o}
    {\par\addvspace{1.5ex}
     \noindent\begin{minipage}{\textwidth}
     \IfNoValueTF{##1}
       {\csuse{old#1}}
       {\csuse{old#1}[##1]}}
    {\csuse{endold#1}
     \end{minipage}
     \par\addvspace{1.5ex}}
}
\newcommand{\R}        	{{\mathbb R}}
\newcommand{\kk}         {\mathbb K}
\newcommand{\Z}        		{{\mathbb Z}}
\newcommand{\mono}		{\hookrightarrow}
\newcommand{\PD}      	    	{{\mathsf{PD}}}
\newcommand{\Pmod}      	{{\mathsf{PMod}}}
\newcommand{\Dgm}          	{{\mathsf{Dgm}}}
\newcommand{\Ffunc}          	{\mathsf{F}}
\newcommand{\Gfunc}          	{\mathsf{G}}
\newcommand{\Mfunc}          	{{\mathsf{M}}}
\newcommand{\Nfunc}          	{{\mathsf{N}}}
\newcommand{\colim}		{\mathsf{colim}}
\newcommand{\image}		{\mathsf{im}}
\newcommand{\nul}       {\mathsf{null}}
\newcommand{\id}			{\mathsf{id}}
\newcommand{\ee}			{\varepsilon}
\newcommand{\op}			{\mathsf{op}}
\newcommand{\down}    {\mathsf{Down}}
\newcommand{\supp}{\text{supp}}
\newcommand\define[1]		{{\bf{#1}}}
\newcommand{\proj}{\mathsf{pr}}
\newtheoremstyle{amit}% name
{7pt}% Space above
{7pt}% Space below
{\itshape}% Body font
{7pt}% Indent amount
{\bf}% Theorem head font
{:}% Punctuation after theorem head
{.5em}% Space after theorem head
{}% Theorem head spec (can be left empty, meaning `normal')
\newcommand{\N}			{{{\mathbb{N}}}}
\renewcommand{\Vec}		{{{\mathsf{Vec}}}}
\newcommand{\rank}		{{{\mathsf{rk\,}}}}
\newcommand{\CPmod}      	{{\mathsf{CPMod}}}
\theoremstyle{plain}
\newtheorem{thm}{Theorem}[section]
\newtheorem{lem}[thm]{Lemma}
\newtheorem{prop}[thm]{Proposition}
\newtheorem{cor}[thm]{Corollary}
\theoremstyle{definition}
\newtheorem{defn}[thm]{Definition}
\newtheorem{ex}[thm]{Example}
\theoremstyle{remark}
\newtheorem{rmk}[thm]{Remark}
\title{Galois Connections in Persistent Homology}
\author[1]{Aziz Burak G\"ulen\footnote{aziz.burak.gulen@gmail.com}}
\author[1]{Alexander McCleary\footnote{Corresponding Author, alex.mccleary@gmail.com}}
\affil[1]{Department of Mathematics, Ohio State University}
\date{}
\begin{document}

\maketitle

\begin{abstract}
We present a new language for persistent homology in terms of Galois connections.
This language has two main advantages over traditional approaches.
First, it simplifies and unifies central concepts such as interleavings and matchings.
Second, it provides access to Rota's Galois connection theorem---a powerful tool with many potential applications in applied topology.
To illustrate this, we use Rota's Galois connection theorem to give a simple proof of the bottleneck stability theorem.
Finally, we use this language to establish relationships between various notions of multiparameter persistence diagrams.
\end{abstract}

%%%%%%%%%%%%%%%%%%%%%%%%%%%%%%%%%%%%%%%%%%%%%%%%%%%%%%%%%%%
\section{Introduction}
%%%%%%%%%%%%%%%%%%%%%%%%%%%%%%%%%%%%%%%%%%%%%%%%%%%%%%%%%%%

High-dimensional data sets pose unique challenges in data analysis.
Statistics provides a wealth of quantitative tools for tackling high-dimensional data.
Topological data analysis (TDA), on the other hand, offers \emph{qualitative} invariants for understanding high-dimensional data sets~\cite{carlsson2009topology}.
Persistent homology is one of the main tools used in TDA.
It takes as input a nested sequence of spaces and outputs an invariant that captures where holes were born and died in the sequence of spaces.
This invariant is called the persistence diagram or barcode.

\begin{figure}
    \centering
    \includegraphics[scale=1]{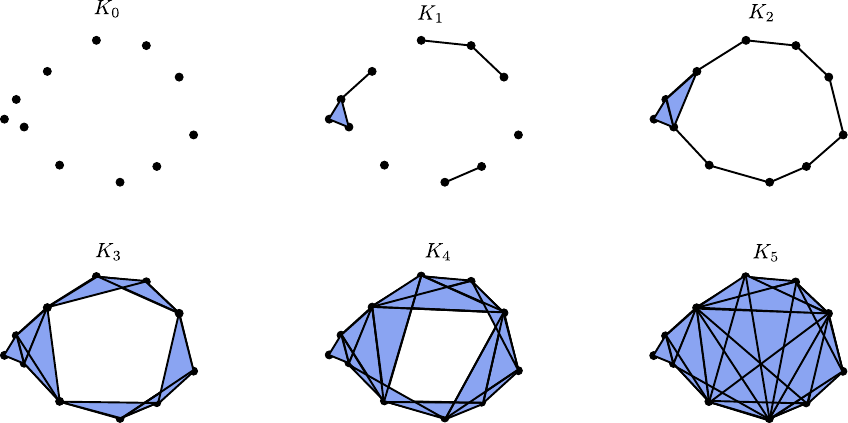}
    \caption{A filtration indexed by the totally ordered set $\{0 \leq 1 \leq 2 \leq 3 \leq 4 \leq 5\}$. There is 1-dimensional homology born at 2 and dying at 5.}
    \label{fig: filtration}
\end{figure}

These nested sequences of spaces are known as \emph{filtrations}.
In applications, they usually arise from data in the form of finite metric spaces.
Given a finite metric space $X$, there are numerous ways of obtaining a filtration.
These include Vietoris-Rips complexes, \v Cech complexes, and witness complexes.
Each of these methods produces a simplicial complex $K_t$ for each $t\in [0,\infty)$ with inclusions $K_t \mono K_s$ for all $t \leq s$; see Figure \ref{fig: filtration}.

The fundamental idea of persistent homology is to refrain from choosing a single $K_t$ to represent the geometry of $X$ and instead consider the entire filtration at once.
The persistence diagram accomplishes this by encoding where and how the homology of the filtration changes.
This turns out to be surprisingly simple to implement and relatively quick to compute; current algorithms are matrix multiplication time and often faster with real-world data \cite{ripser}.

The first step towards defining persistence diagrams is to apply homology with coefficients in a fixed field $\kk$ to the filtration $\{K_t\}$.
This produces a sequence of vector spaces $H_d(K_t)$.
The inclusions $K_t \mono K_s$ induce linear maps from $H_d(K_t)$ to $H_d(K_s)$ for each $t\leq s \in [0,\infty)$.
This data is known as a \emph{persistence module} and can be more succinctly defined as a functor from the poset category $[0,\infty)$ to the category of finite-dimensional $\kk$-vector spaces $\Vec$.

To convert a persistence module into a persistence diagram, we use an intermediate step called \emph{nullity function}; see Definition~\ref{defn: nullity function}
This function takes as input intervals $(a,b) \subseteq [0,\infty)$ and captures the number of homological features that were born at or before $a$ and die at or before $b$.
We now employ a tool from order theory to convert the nullity function into a persistence diagram: M\"obius inversion.

M\"obius inversion can be thought of as a combinatorial analog of a derivative.
Just as a derivative captures how a function changes, the M\"obius inversion captures where, and how, a function defined on a poset changes.
Nullity functions change where homology classes are born and die.
This is precisely the data isolated by the persistence diagram.

Persistence diagrams of totally-ordered filtrations are remarkably well-behaved.
They are a complete invariant \cite{zomorodian2004computing}, meaning no information is lost when passing from a persistence module to a persistence diagram.
Additionally, persistence diagrams of totally-ordered filtrations are a stable invariant~\cite{CSEdH}, meaning that changes to a persistence module induce proportional changes to its persistence diagram.

\subsection{Historical Context}

Here we present a brief, and by no means complete, overview of some historical developments leading to our results.
Persistent homology arose independently in the works of Robins~\cite{vanessa}; Frosini and Landi \cite{landi-pseudodistances}; and Edelsbrunner, Letscher, and Zomorodian~\cite{Edelsbrunner2002}.

One of the most significant developments since the inception of persistent homology is the bottleneck stability theorem \cite{CSEdH}.
This theorem implies that if a data set is perturbed by $\ee$, the resulting persistence diagram will be perturbed by at most $\ee$.
This result is crucial for justifying applications of persistent homology as real-world data always has some amount of noise present.

In 2009, Carlsson and Zomorodian began the study of multiparameter persistence with \emph{The Theory of Multidimensional Persistence}~\cite{Carlsson2009}.
In multiparameter persistence, the totally ordered set $[0,\infty)$ is replaced with the partially ordered set $[0,\infty)^n$.
Filtrations indexed by $[0,\infty)^n$ arise frequently and have a wealth of potential applications \cite{botnan-intro-multiparameter}.
Unfortunately, Carlsson and Zomorodian showed that there is no complete, discrete invariant for multiparameter persistent homology unlike the 1-parameter setting.
This setback has not slowed down the search for rich invariants of multiparameter persistence modules and it remains a very active area of study today.

Current approaches to multiparameter persistence mostly revolve around partial invariants.
In their 2009 paper, Carlsson and Zomorodian proposed the rank function as an alternative to persistence diagrams.
Cerri et al.\ encoded the same information in an invariant known as the fibered barcode, obtained by restricting a multiparameter persistence module to every affine line of non-negative slope and taking the usual 1-parameter persistence diagram \cite{landi,landi2}.
Lesnick and Wright developed a software package, RIVET, for computing fibered barcodes \cite{RIVET}.

The algebraic-combinatorial approach to persistent homology was pioneered by Patel in~\cite{Patel2018} where persistence diagrams were constructed by a M\"obius inversion.
This has allowed for typical methods of constructing persistence diagrams to be extended to multiparameter persistence modules \cite{kim2018generalized}.
Since \cite{Patel2018}, M\"obius inversion has been used extensively throughout persistent homology \cite{bottleneck, kim2018generalized, betthauser2019graded, botnan2022signed, fasy2022persistent}.
A major development with the algebraic-combinatorial approach to persistence was the introduction of the birth-death function by Henselman-Petrusek~\cite{saecular}.
While birth-death functions do not explicitly appear in \cite{saecular}, the main idea is present there.
Birth-death functions for filtrations were explicitly written up in \cite{editdistance}.

\subsection{Our Contributions}

Galois connections offer a new framework for the study of persistent homology.
This framework integrates seamlessly with the main existing algebraic-combinatorial tool used in persistent homology: M\"obius inversion.
In Section \ref{sec: RGCT}, we examine the relationship between Galois connections and the M\"obius inversion operator.
In particular, we prove a new formulation of Rota's Galois connection theorem (RGCT); see Theorem~\ref{thm:RGCT}.
Traditional statements of RGCT~\cite{rota64, greenemobius} involve the M\"obius function while our version states RGCT directly through the M\"obius inversion operator.
Our approach both simplifies the theorem's statement and facilitates its application in persistent homology. 

In Section \ref{sec: pmods-and-pdgms}, we introduce a category of persistence modules $\Pmod$ and a category of persistence diagrams $\Dgm$.
The category $\Pmod$ is closely related to the category of filtrations defined over finite lattices from \cite{editdistance}.
The objects in $\Pmod$ are functors from posets to the category of finite-dimensional vector spaces and the morphisms are given by pullbacks along right adjoints.
The category $\Dgm$ has persistence diagrams as objects (which we define as integer-valued functions over posets of intervals; see Definition \ref{def: diagrams}) where the morphisms are given by pushforwards along order-preserving maps.
We then define a functor from the full subcategory of persistence modules satisfying a finiteness condition to the category of persistence diagrams.
This functor is constructed by first defining an intermediary function for each persistence module, the nullity function (Definition \ref{defn: nullity function}), and then applying a M\"obius inversion.
RGCT, which implies that M\"obius inversion takes pullbacks along right adjoints to pushforwards along left adjoints, then immediately implies the assignment of a persistence diagram to each persistence module is functorial; see Proposition \ref{prop:functoriality}.

In Section \ref{sec: metrics}, we interpret interleavings between persistence modules and matchings between persistence diagrams in our language.
We introduce a new notion of interleavings which we call Galois interleavings.
While these Galois interleavings are significantly more flexible than the traditional interleavings for persistence modules over $[0,\infty)$, in Proposition \ref{prop: old interleaving} we show that, in this setting, Galois interleavings induce the same metric.
Both Galois interleavings and matchings are simply spans in the categories $\Pmod$ and $\Dgm$ respectively.
That is, if $\Mfunc$ and $\Nfunc$ are persistence modules then a Galois interleaving between $\Mfunc$ and $\Nfunc$ is a persistence module $\Gamma$ with morphisms $\Mfunc \leftarrow \Gamma \rightarrow \Nfunc$.
Likewise, if $w_0$ and $w_1$ are persistence diagrams then a matching between them is a nonnegative persistence diagram $\nu$ with morphisms $w_0 \leftarrow \nu \rightarrow w_1$. 
In Section \ref{sec: stability}, we exploit the flexibility provided by Galois interleavings together with functoriality to give a proof of the Bottleneck Stability Theorem (Theorem \ref{thm:bottleneck stability}).

In Section \ref{sec: multiparameter}, we extend the birth-death function of~{\cite{editdistance}} from filtrations to persistence modules by using free covers.
We then utilize RGCT to establish previously unknown relationships between various notions of multiparameter persistence diagrams.
In particular, we prove that the persistence diagrams obtained from nullity functions and birth-death functions are equivalent; see Proposition~\ref{prop:ker and bd}.
We also show that rank function persistence diagrams and birth-death function (or nullity function) persistence diagrams are closely related  (Proposition \ref{prop:relation diagrams}) and provide a formula for the fibered barcode in terms of the persistence diagram using RGCT (Proposition \ref{prop: fibered linear time}). 

\subsection{Related Work}

This paper builds off of \cite{editdistance} and many concepts presented here are refinements of ideas present there.
The work of Bubenik et al.\ on interleavings \cite{bubenik2017interleaving, bubenik-higher-interpolation} has been a particularly strong influence.
Our version of (Galois) interleaving is closely related to the version present in \cite{bubenik2017interleaving}.
Additionally, Bubenik et al.\ have also studied persistence modules over preordered sets \cite{bubenikgpm}, although they take a different approach than us.
The finiteness condition used in this paper extends the notion of constructibility used in \cite{Patel2018} and is more restrictive than finitely presentable or the tameness condition in \cite{miller}.
Our finiteness condition is similar to the notion of tame functor in \cite{Scolamiero2017}.

\subsection{Acknowledgements}

We thank Amit Patel for many helpful conversations and providing valuable feedback.
In particular, the method of defining birth-death functions for persistence modules used here was suggested by Patel.
We are grateful to Ezra Miller for his detailed feedback and suggestions to improve the paper.
Additionally, we are thankful to the anonymous reviewers for the many improvements they suggested.
We would also like to thank Facundo M\'emoli.
The language of pushforwards and pullbacks used throughout this paper stemmed from conversations with M\'emoli.

This work is partially supported by NSF-IIS-1901360, NSF-DMS-1547357, NSF-CCF-1839356, NSF-CCF-1740761, and NSF-CCF-1839358.

%%%%%%%%%%%%%%%%%%%%%%%%%%%%%%%%%%%%%%%%%%%%%%%%%%%%%%%%%%%
\section{Preliminaries}
%%%%%%%%%%%%%%%%%%%%%%%%%%%%%%%%%%%%%%%%%%%%%%%%%%%%%%%%%%%

Here we present some background on partially ordered sets and M\"obius inversion.
For a more thorough reference, see \cite{greenemobius}.

%%%%%%%%%%%%%%%%%%%%%%%%%%%%%%%%%%%%%%%%%%%%%%%%%%%%%%%%%%%
\subsection{Partially Ordered Sets}
%%%%%%%%%%%%%%%%%%%%%%%%%%%%%%%%%%%%%%%%%%%%%%%%%%%%%%%%%%%

\begin{defn}
    A partially ordered set, or \define{poset}, is a set $P$ with a relation $\leq$ satisfying
    \begin{itemize}
        \item Reflexivity: $a\leq a$ for any $a \in P$.
        \item Antisymmetry: if $a\leq b$ and $b \leq a$ then $a=b$ for any $a,b \in P$.
        \item Transitivity: if $a\leq b$ and $b \leq c$ then $a \leq c$ for any $a,b,c \in P$.
    \end{itemize}
\end{defn}

\begin{ex}\label{ex:poset}
    A recurring example we use throughout this paper is the poset ${P = \{ a, b, c, d\}}$ with the partial order
        $a \leq c$, $b \leq c$, and $c \leq d$.
    See Figure \ref{fig:poset} for the Hasse diagram of $P$.
\end{ex}

\begin{figure}
    \centering
    \begin{tikzcd}
         &d \\
        &c\arrow[u, no head]\\
        a\arrow[ur, no head] & &b\ar[ul, no head]
    \end{tikzcd}
    \caption{
        The Hasse diagram of the poset $P$ from Example \ref{ex:poset}.
    }
    \label{fig:poset}
\end{figure}

\begin{defn} \label{def:downsets}
    For any poset $P$, a subposet $A \subseteq P$ is an \define{upset} if $a \in A$ and $b \in P$ with~$a \leq b$ implies that $b \in A$.
    For any element $a \in P$ the principal upset of a is the set
    \[
        \uparrow a := \{ b \in P \,\big|\, a\leq b \}.
    \]
    Dually, a subposet $B\subseteq P$ is a \define{downset} if $b \in B$ and $a \in P$ with~$a \leq b$ implies $a \in B$.
    The principal downset of an element $b \in P$ is the set
    \[
        \downarrow b := \{ a \in P \mid a\leq b \}.
    \]
    The set of all non-empty downsets in $P$, ordered by inclusion, forms a poset denoted~$\down(P)$.
    There is an embedding of $P$ into $\down(P)$ via $a \mapsto \downarrow a$.
\end{defn}

The persistence diagram of a persistence module is a function capturing information that persists over intervals in a poset.
Therefore, in order to define the persistence diagram of a module, we first define posets of intervals.

\begin{defn}\label{def: posets-of-ints}
    For any poset $P$, there are two \define{posets of intervals} that appear in applied topology, $\bar P$ and $\hat P$. Both posets of intervals have the same underlying set:
    \[
        \big\{ (a,b) \in P \times  P \,|\, a\leq b \big\}.
    \]
    The poset $\bar P$ has the product order $(a,b) \leq (c,d)$ if $a\leq c$ and $b \leq d$ while the poset $\hat P$ has the reverse containment order $(a,b) \supseteq (c,d)$ if $a\leq c$ and $d \leq b$.
    The \define{diagonal} of $\bar P$ (or~$\hat P$) is the subset $\Delta_P := \{(a,a) \mid a \in P\}$ and the \define{off-diagonal intervals} are $\bar{P}^\circ := \bar P - \Delta_P$.
\end{defn}

Note that an order-preserving map $f: P \to Q$ induces an order-preserving map $\bar f : \bar P \to \bar Q$ by $\bar f(a,b) = \big(f(a), f(b) \big)$ but, in general, fails to induce an order-preserving map from $\hat P$ to~$\hat Q$.

\begin{defn}
    Let $P$, $Q_0$, and $Q_1$ be posets with order-preserving maps
    \begin{center}
        \begin{tikzcd}
            Q_0\ar[dr, "f"]   &   &Q_1\ar[dl, "g"']\\
                &P.
        \end{tikzcd}
    \end{center}
    The \define{fibered product} of $Q_0$ and $Q_1$ by $P$ is the poset $Q_0 \times_P Q_1$ with underlying set
    $$\{ (a,b) \in Q_0 \times Q_1 \mid f(a) = g(b) \}$$
    and the inherited product order.
    It is easily checked that this poset is the limit of the above diagram in the category of posets with order-preserving maps.
\end{defn}

It will occasionally be useful for us to loosen the definition of a poset by dropping the antisymmetry condition.

\begin{defn}
    A \define{preordered set} is a set $P$ with a relation $\leq$ satisfying reflexivity and transitivity.
\end{defn}

One of the main reasons for introducing preordered sets is that they can serve as a convenient domain for defining certain functors. Specifically, we use preordered sets in the proof of the triangle inequality for the Galois interleaving distance in Section~\ref{subsec: triangle ineq}.

Preordered sets have a disadvantage that prevents us from using them more often: M\"obius inversions are not necessarily unique.
This issue can be overcome in certain circumstances, in fact, M\"obius inversion can be defined even more generally for certain categories \cite{leroux-mobius, lawvere-mobius}. 
However this is not necessary for this paper and so we leave it out.

%%%%%%%%%%%%%%%%%%%%%%%%%%%%%%%%%%%%%%%%%%%%%%%%%%%%%%%%%%%
\subsection{Galois Connections}
%%%%%%%%%%%%%%%%%%%%%%%%%%%%%%%%%%%%%%%%%%%%%%%%%%%%%%%%%%%

Galois connections play a fundamental role throughout this paper.
Here we provide the necessary background on Galois connections.

%\edit{pulled the galois insertion part out of this def.}

\begin{defn}
    A \define{Galois connection} between two posets $P$ and $Q$ consists of order-preserving maps $f: P \leftrightarrows Q :g$ satisfying $f(p)\leq q$ if and only if $p \leq g(q)$ for any $p\in P$ and $q\in Q$.
    We refer to $f$ as the left adjoint and $g$ as the right adjoint of the Galois connection.
    We will occasionally use the notation $(f,g)$ for a Galois connection when the domains of $f$ and $g$ are clear from context.
\end{defn}

\begin{rmk}\label{rmk: unique left-right adjoint via formula}
    If a poset map $f:P \to Q$ has a right adjoint, then it is uniquely defined and given by the formula $g (q) = \max \{ p \in P \mid f(p)\leq q \}$.
    Similarly, if $g: Q \to P$ has a left adjoint it is unique and given by $f(p) = \min \{ q \in Q \mid p \leq g(q) \}$ \cite[Section 5]{greenemobius}.
\end{rmk}

The following lemma is an immediate consequence of Remark \ref{rmk: unique left-right adjoint via formula}.

\begin{lem} \label{lem:finite-totally-ordered-adjoints}
    If $P$ and $Q$ are finite, totally ordered sets and $f: P \to Q$ is an order-preserving map, then $f$ has a right adjoint if and only if $f$ sends the minimum element of $P$ to the minimum element of $Q$.
    Similarly, $f$ has a left adjoint if and only if $f$ sends the maximum element of $P$ to the maximum element of $Q$.
\end{lem}

\begin{rmk}
    There is another historically used definition of Galois connections.
    An antitone Galois connection consists of order-reversing maps $f: P \leftrightarrows Q: g$ satisfying $q \leq f(p)$ if and only if $p \leq g(q)$.
    Antitone Galois connections are, in a sense, equivalent to Galois connections as follows.
    Let $P^\op$ be the opposite poset of $P$: the poset with the same underlying set and the opposite order.
    Then $f: P \leftrightarrows Q :g$ is a Galois connection if and only if $f: P^\op \leftrightarrows Q :g$ is an antitone Galois connection.
    We will not make use of antitone Galois connections in this paper.
\end{rmk}

\begin{ex}
    The inclusion of $\Z$ into $\R$ has both a left adjoint: the ceiling function; and a right adjoint: the floor function.
\end{ex}

\begin{defn}
    A \define{Galois insertion} is a Galois connection $f: P \leftrightarrows Q :g$  satisfying $f \circ g = \id_{Q}$.
\end{defn}

\begin{lem}\label{lem: inf and def}
    Let $f : P \leftrightarrows Q : g$ be a Galois connection.
    Then for all $p \in P$ and $q \in Q$,
    \[
        p \leq g \big( f (p) \big) \text{ and } f \big( g(q) \big) \leq q .
    \]

\end{lem}
\begin{proof}
    The Galois connection condition implies that
    \begin{align*}
        f(p) \leq f(p)  &\iff p \leq g \big( f(p) \big)\\
        g(q) \leq g(q)  &\iff f \big( g(q) \big) \leq q.
    \end{align*}
    The inequalities on the left hold for all $p\in P$ and $q \in Q$ and thus, the inequalities on the right hold for all $p \in P$ and $q \in Q$.
\end{proof}

\begin{lem}\label{lem: idempotent}
    If $f : P \leftrightarrows Q : g$ is a Galois connection, then
    \begin{align*}
        f \circ g \circ f &= f\\
        g \circ f \circ g &= g.
    \end{align*}
    Moreover, the maps $f \circ g$ and $g \circ f$ are idempotent.
\end{lem}
\begin{proof}
    For any $p \in P$, Lemma \ref{lem: inf and def} and the fact that $f$ is order-preserving give
    \[
        p \leq g \big( f(p) \big) \implies f(p) \leq f \Big( g \big( f(p) \big) \Big).
    \]
    The reverse inequality follows from
    \[
        g \big( f(p) \big) \leq g \big( f(p) \big) \iff f \Big( g \big( f(p) \big) \Big) \leq f(p)
    \]
    which holds for all $p \in P$ and thus, $f \circ g \circ f = f$.
    The proof that $g \circ f \circ g = g$ follows analogously.
    The fact that~$f \circ g$ and $g \circ f$ are idempotent follows immediately from the fact that $f \circ g \circ f = f$.
\end{proof}

The proofs of the following propositions are straightforward and follow directly from the definitions.

\begin{prop}
    If $f:P \leftrightarrows Q :g$ is a Galois connection (insertion), then $(f,g)$ extends to a Galois connection (insertion) between the posets of intervals $\bar f: \bar P \leftrightarrows \bar Q :\bar g$ defined componentwise as $\bar f: (a,b) \mapsto (f(a),f(b))$ and $\bar g: (c,d) \mapsto (g(c),g(d))$.
\end{prop}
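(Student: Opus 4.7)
The plan is to verify in order the four things needed: well-definedness of $\bar f$ and $\bar g$ on the underlying sets of intervals, order-preservation with respect to the product order, the adjunction condition, and (in the insertion case) the identity $\bar f \circ \bar g = \id_{\bar Q}$. Each of these should follow mechanically from the corresponding property of $(f,g)$ applied coordinatewise, so the proof is essentially a bookkeeping exercise.

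First I would check that $\bar f$ actually lands in $\bar Q$: given $(a,b) \in \bar P$ we have $a\leq b$, and since $f$ is order-preserving this gives $f(a)\leq f(b)$, so $\bar f(a,b) = (f(a),f(b)) \in \bar Q$. The same argument applies to $\bar g$. Order preservation of $\bar f$ with respect to the product order is immediate: if $(a,b)\leq (a',b')$ then $a\leq a'$ and $b\leq b'$, whence $f(a)\leq f(a')$ and $f(b)\leq f(b')$; likewise for $\bar g$.

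Next I would verify the adjunction condition. The key observation is that the product order makes the condition decouple into two independent coordinates: $\bar f(a,b)\leq (c,d)$ holds iff $f(a)\leq c$ and $f(b)\leq d$, which by the Galois condition for $(f,g)$ is equivalent to $a\leq g(c)$ and $b\leq g(d)$, i.e.\ $(a,b)\leq \bar g(c,d)$. This is where the product (as opposed to reverse-containment) order is essential, so I would flag that $\hat P$ would not enjoy the same formulation. Finally, for the insertion case, if $f\circ g = \id_Q$ then for any $(c,d) \in \bar Q$ one computes $\bar f(\bar g(c,d)) = \bar f(g(c),g(d)) = (f(g(c)),f(g(d))) = (c,d)$, so $\bar f \circ \bar g = \id_{\bar Q}$.

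There is no real obstacle here; the only thing to be slightly careful about is that when writing $\bar f(a,b) = (f(a),f(b))$ one is implicitly using that the underlying set of $\bar P$ consists of pairs, which requires the well-definedness check above. Everything else is a direct transfer of the one-variable statement to each coordinate.
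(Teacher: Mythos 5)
Your proof is correct and is precisely the coordinatewise verification that the paper has in mind when it states that the result ``follows immediately from the definitions.'' Nothing is missing: well-definedness, order-preservation, the decoupled adjunction condition, and the insertion identity are exactly the checks required.
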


\begin{prop}
    If $f : P \leftrightarrows Q :g$ and $h : Q \leftrightarrows R: \ell$ are Galois connections (insertions), then $h \circ f : P \leftrightarrows R: g \circ \ell$ is a Galois connection (insertion).
\end{prop}

\begin{defn}\label{def: pushout-poset}
    Let $(Q_0, \leq_0)$, $(Q_1,\leq_1)$, and $(P,\leq_P)$ be posets with Galois insertions $(f_0,g_0)$ and $(f_1,g_1)$
    \begin{center}
        \begin{tikzcd}
            Q_0\ar[dr,bend right,"f_0"']     &   &Q_1\ar[dl, bend left, "f_1"]\\
                &P\ar[ul, bend right, "g_0"]\ar[ur, bend left, "g_1"']
        \end{tikzcd}
    \end{center}
    The \define{gluing} of $Q_0$ and $Q_1$ along $P$ is the preordered set $Q_0 \sqcup_P Q_1$ with the underlying set~${ \big( Q_0 \times \{0\} \big) \cup \big( Q_1 \times \{1\} \big)}$ and order $(a,t) \preceq (b,s)$ if
    \begin{itemize}
        \item $t=s$ and $a \leq_t b$ or
        \item $t\neq s$ and $g_s \big( f_t(a) \big) \leq_s b$.
    \end{itemize}
\end{defn}

It is not immediately obvious that the relation $\preceq$ is transitive in the case where $t \neq s$ and $(a,t) \preceq (b,s) \preceq (c,t)$.
Indeed, the Galois insertion condition is necessary for this to hold.
If $(a,t) \preceq (b,s) \preceq (c,t)$ then $g_s \big( f_t(a) \big) \leq_s b$ and $g_t \big( f_s(b) \big) \leq_t c$.
Therefore $g_t \Big( f_s \big( g_s( f_t(a) ) \big) \Big) \leq_t c$ and, since $f_s \circ g_s = \id$, this reduces to $g_t \big( f_t(a) \big) \leq_t c$.
By Lemma~\ref{lem: inf and def}, we have $a \leq_t g_t \big( f_t(a) \big)$ and so $a \leq_t c$ as desired.

\begin{rmk}
    The definition of Galois connections (and insertions) applies as is to the more general setting of preordered sets rather than posets.
    In this paper, we make use of a Galois connection between preordered sets in only one instance: the proof of the triangle inequality for the Galois interleaving distance (and in Proposition \ref{prop: concatenation insertion}, which is utilized in the proof of the triangle inequality).
\end{rmk}

\begin{prop}\label{prop: concatenation insertion}
    Let $Q_0$, $Q_1$, and $P$ be posets with Galois insertions $(f_0,g_0)$ and $(f_1,g_1)$ as in Definition \ref{def: pushout-poset}.
    Then for $s \in \{0,1\}$ there are Galois insertions $\pi_s : Q_0 \sqcup_P Q_1 \leftrightarrows Q_s : \iota_s$ given by $\iota_s(a) = (a,s)$ and
    \[
        \pi_s (a,t) =
        \begin{cases}
            a   &\text{if } t=s\\
            g_s \big( f_t (a) \big) &\text{otherwise.}
        \end{cases}
    \]
    Moreover, the following diagram commutes.
    \[
    \begin{tikzcd}
           &Q_0 \sqcup_P Q_1\ar[dl, "\pi_0"']\ar[dr, "\pi_1"]   \\
        Q_0\ar[dr, "f_0"'] &   &Q_1\ar[dl, "f_1"]    \\
            &P  
    \end{tikzcd}
    \]

\end{prop}
The proof is trivial.
The following concept will play an important role in the finiteness conditions we assume throughout the paper.

\begin{defn}
    A \define{co-closure operator} on a poset $P$ is an order-presering map ${c: P \to P}$ that is deflationary: meaning $c(p) \leq p$ for all $p \in P$; and idempotent: meaning $c \circ c =c$.
\end{defn}

\begin{prop}\label{prop: coclosure}
    Let $P$ be a poset and $c:P \to P$ be an order-preserving map.
    Consider the canonical factorization of $c$ through its image
        \[
    \begin{tikzcd}
        P\ar[rr, "c"]\ar[dr, two heads, "\pi"']   &   &P\\
            &\image(c).\ar[ur, hookrightarrow, "\iota"']
    \end{tikzcd}
    \]
    Then $c$ is a co-closure operator if and only if $(\iota, \pi)$ form a Galois connection.
\end{prop}
\begin{proof}
    Suppose that $(\iota, \pi)$ form a Galois connection.
    Then Lemma \ref{lem: inf and def} implies that $c = \iota \circ \pi$ is deflationary and Lemma \ref{lem: idempotent} implies that $c$ is idempotent.
    Thus, $c$ is a co-closure operator.

    Conversely, suppose that $c$ is a co-closure operator.
    First observe that $\iota$, being the inclusion of the subposet $\image(c)$ into $P$, satisfies
    \[\iota(x) \leq \iota(y) \iff x \leq y\]
    for all $x,y \in \image(c)$.
    Now for any $x \in \image(c)$ and $p \in P$, we must show
    \[
        \iota(x) \leq p \iff x \leq \pi(p).
    \]
    Since $c$ is idempotent, if $x \in \image(c)$ then $c(\iota(x)) = \iota(x)$.
    Therefore,
    \begin{align*}
        \iota(x) \leq p &\implies c \big( \iota(x) \big) \leq c(p)\\
        &\implies \iota(x) \leq c(p)\\
        &\implies \iota(x) \leq \iota \big( \pi(p) \big)\\
        &\implies x \leq \pi(p).
    \end{align*}
    Now, using the fact that $c$ is deflationary ($c(p) \leq p$ for all $p \in P$), the converse implication follows from,
    \begin{align*}
        x \leq \pi(p)   &\implies   \iota(x) \leq \iota \big( \pi(p) \big)\\
        &\implies \iota(x) \leq c(p)\\
        &\implies \iota(x) \leq p.
    \end{align*}
    Therefore, $(\iota, \pi)$ form a Galois connection.
\end{proof}

%%%%%%%%%%%%%%%%%%%%%%%%%%%%%%%%%%%%%%%%%%%%%%%%%%%%%%%%%%%
\subsection{M\"obius Inversion}
%%%%%%%%%%%%%%%%%%%%%%%%%%%%%%%%%%%%%%%%%%%%%%%%%%%%%%%%%%%

The M\"obius inversion is the core tool at the heart of the algebraic-combinatorial approach to persistence developed in \cite{Patel2018}.
It can be thought of as a combinatorial version of the derivative: for a function $m:P \to \Z$, its M\"obius inversion $\partial_P m :P \to \Z$ captures where, and how, $m$ changes.
Here we cover only the necessary background for this paper and refer the reader to \cite{rota64} for a more thorough overview.

\begin{defn}
    Let $P$ be a poset and $m: P \to \Z$ be a function.
    A \define{M\"obius inversion} of $m$ is a function $m' : P \to \Z$ satisfying
    \begin{equation}\label{eq: MI formula}
        m(b) = \sum_{a \leq b} m'(a) \text{ for all } b\in P,
    \end{equation}
    where the right-hand side contains only finitely many nonzero summands.
    We refer to this equation as the M\"obius inversion formula. A function $m : P \to \Z$ is called \define{M\"obius invertible} if a M\"obius inversion of $m$ exists.
\end{defn}

\begin{lem}
    Let $P$ be a poset and $n : P \to \Z$ be a function satisfying
    \[
    0 = \sum_{a\leq b} n(a) \text{ for all } b\in P,
    \]
    where the right-hand side contains only finitely many nonzero summands. Then $n(p) = 0$ for all $p\in P$.
\end{lem}

\begin{proof}\label{lem: zero integral implies zero function}
    Suppose, towards a contradiction, that $n(p)\neq 0$ for some $p\in P$. 
    Consider the finite set 
    \[
        S:=\{a\in P \mid a\leq p \text{ and } n(a)\neq 0\}.
    \]
    Since $S$ is finite, it has a minimal element $x$ (with respect to the order on $P$). By minimality of $x$ in $S$, we have $n(a)=0$ for every $a<x$.
    Applying the given identity at $b=x$ yields
    \[
        0=\sum_{a\leq x} n(a)=\left(\sum_{a<x} n(a)\right)+n(x)=0+n(x)=n(x),
    \]
    which contradicts $x\in S$ (i.e., $n(x)\neq 0$). Therefore no such $p$ exists, and hence $n\equiv 0$ on $P$.
\end{proof}

\begin{prop}\label{prop: mi is unique}
    Let $P$ be a poset and $m : P \to \Z$ be a M\"obius invertible function. Then, $m$ has a unique M\"obius inverse.
\end{prop}
\begin{proof}
    Let $m'$ and $m''$ be two M\"obius inverses of $m$. By definition, for every $b\in P$ we have
    \[
        m(b)=\sum_{a\leq b} m'(a)\qquad\text{and}\qquad m(b)=\sum_{a\leq b} m''(a),
    \]
    where each sum has only finitely many nonzero summands. Subtracting these equalities gives, for every $b\in P$,
    \[
        0=\sum_{a\leq b}\bigl(m'(a)-m''(a)\bigr).
    \]
    Define the function $n:P\to\mathbb Z$ by $n(a):=m'(a)-m''(a)$. Then, $n$ satisfies
    \[
        0=\sum_{a\leq b} n(a)\quad\text{for all } b\in P,
    \]
    and for each fixed $b$ the right-hand side has only finitely many nonzero summands (indeed, the nonzero summands come from the finite union of the nonzero summands appearing in the two identities above). By the Lemma~\ref{lem: zero integral implies zero function}, it follows that $n(p)=0$ for every $p\in P$. Hence $m'(p)=m''(p)$ for all $p\in P$, so the M\"obius inverse of $m$ is unique.
\end{proof}

\begin{rmk}
    By the proposition above, every M\"obius invertible function $m : P \to \mathbb{Z}$ admits a unique M\"obius inversion.  
    From now on, we denote this inversion by $\partial_P m : P \to \mathbb{Z}$, and refer to it as \emph{the} M\"obius inversion of $m$.
\end{rmk}

\begin{rmk}
    When the poset $P$ is finite, every function $m: P \to \Z$ has a M\"obius inversion~\cite{rota64}. Moreover, the M\"obius inversion can be computed via matrix multiplication as described in Appendix~\ref{app: mobius}. For a fixed poset $P$, we view $\partial_P$ as a linear operator on the set of all M\"obius invertible functions from $P$ to $\Z$.
When the underlying poset $P$ is clear, we will drop the subscript and simply write $\partial$ for the M\"obius inversion operator on $P$.    
\end{rmk}

\begin{rmk}
    The relationship between a function $m$ and its M\"obius inversion $\partial_P m$ is analogous to the relationship between the cumulative distribution function (CDF) and the probability density function (PDF) of a probability measure.
Explicitly, if $\alpha$ is a probability measure on $\R$ with CDF $F_\alpha$ then its PDF, if it exists, is the unique function $f_\alpha$ satisfying
\[
    F_\alpha(x) = \int_{t \leq x} f_\alpha(t) \, dt.
\]
\end{rmk}

%%%%%%%%%%%%%%%%%%%%%%%%%%%%%%%%%%%%%%%%%%%%%%%%%%%%%%%%%%%
\section{Rota's Galois Connection Theorem}\label{sec: RGCT}
%%%%%%%%%%%%%%%%%%%%%%%%%%%%%%%%%%%%%%%%%%%%%%%%%%%%%%%%%%%

The following version of Rota's Galois connection theorem is the main tool used in this paper.
This statement of the theorem was developed as a generalization of Proposition 6.6 in \cite{editdistance}.
An equivalent version appears independently in \cite{sanchezhopf}.
Although it differs from classical statements of Rota's Galois connection theorem\cite{rota64, greenemobius}, we refer to Theorem~\ref{thm:RGCT} as Rota's Galois connection theorem, as justified by Remark~\ref{rmk:rgct}.

\begin{defn}
    Let $P$ and $Q$ be posets and $f: P \to Q$ be a function.
    For any $m : P \to \Z$, the \define{pushforward} of $m$ along $f$ is the function $f_\sharp m : Q \to \Z$ given by
    \[
        f_\sharp m : q \mapsto \sum_{p \in f^{-1}(q)} m(p).
    \]
\end{defn}

\begin{defn}
    Let $P$ and $Q$ be posets and $f: P \to Q$ be a function.
    For any $n: Q \to \Z$, the \define{pullback} of $n$ along $f$ is the function $f^\sharp n : P \to \Z$ defined by $f^\sharp n = n \circ f$.
\end{defn}

\begin{thm}[Rota's Galois Connection Theorem]\label{thm:RGCT}
    Let $P$ and $Q$ be posets, and ${f:P \leftrightarrows Q :g}$ be a Galois connection.
    For any function $m: P \to \Z$, if $m$ is M\"obius invertible, then $g^\sharp (m)$ is M\"obius invertible and
    \[
        \partial_Q \big(g^\sharp (m) \big) = f_\sharp ( \partial_P m ).
    \]
    
\end{thm}
\begin{proof}
    Suppose $m: P \to \Z$ is M\"obius invertible.
    To show that $f_\sharp (\partial_P m)$ is the M\"obius inversion of $g^\sharp (m)$ we show that
    \[
        m \big( g(b) \big) = \sum_{a \leq b} f_\sharp \big(\partial_P m \big)(a)
    \]
    for all $b \in Q$.
    This follows from
    \begin{align*}
        \sum_{a \leq b} \big( f_\sharp \partial_P m \big) (a) &= \sum_{a\leq b} \sum_{p \in f^{-1}(a)} \partial_P m(p)\\
        &=\sum_{\substack{p\in P \\ f(p) \leq b}} \partial_P m(p)\\
        &=\sum_{p\leq g(b)} \partial_P m(p)\\
        &= m(g(b)).
    \end{align*}
\end{proof}

%%%%%%%%%%%%%%%%%%%%%%%%%%%%%%%%%%%%%%%%%%%%%%%%%%%%%%%%%%%
\subsection{Constructible Functions}
%%%%%%%%%%%%%%%%%%%%%%%%%%%%%%%%%%%%%%%%%%%%%%%%%%%%%%%%%%%
%\edit{This section was simplified.}

The functions arising in applied topology are typically not defined over finite posets so we need to be careful to ensure they have M\"obius inversions.
Here we present a finiteness condition that guarantees M\"obius invertibility.

\begin{defn}\label{def: constructible functions}
    Let $P$ be any poset, $m: P \to \Z$ a function, and $S\subseteq P$ a finite subposet.
    We say that $m$ is \define{$S$-constructible} if there is a co-closure operator $c: P \to P$ with image $S$ such that $m = m \circ c$.
    We say that $m$ is \define{constructible} if there is some finite poset $S$ such that $m$ is $S$-constructible.
\end{defn}

\begin{prop}\label{prop: constructible MI}
    Let $m:P \to \Z$ be an $S$-constructible function as in Definition \ref{def: constructible functions}.
    Then $\partial m$ exists and is given by $\partial m = \iota_\sharp (\partial m|_S )$ where $\iota$ is the inclusion of $S$ into $P$.
\end{prop}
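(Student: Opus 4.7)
The plan is to verify directly that the proposed function $\alpha_\sharp(\partial_S m')$ satisfies the M\"obius inversion formula \eqref{eq: MI formula} at every $b \in P$. The proof should boil down to a single application of the Galois adjunction, with all other steps being bookkeeping.

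First I would address well-definedness. Since $S$ is finite, $\partial_S m'$ is an honest integer-valued function with finite support, and its pushforward $\alpha_\sharp(\partial_S m')$ is supported on the finite set $\alpha(S) \subseteq P$. Consequently, for every $b \in P$ the sum $\sum_{a \leq b} \alpha_\sharp(\partial_S m')(a)$ has only finitely many nonzero terms, so it is well-defined as an integer.

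Next I would compute that sum directly. Expanding the definition of the pushforward and exchanging the order of summation gives
\[
    \sum_{a \leq b} \alpha_\sharp(\partial_S m')(a) \;=\; \sum_{a \leq b}\ \sum_{s \in \alpha^{-1}(a)} \partial_S m'(s) \;=\; \sum_{\substack{s \in S \\ \alpha(s) \leq b}} \partial_S m'(s).
\]
The decisive step is to rewrite the indexing condition using the Galois adjunction: $\alpha(s) \leq b$ if and only if $s \leq \beta(b)$. The sum therefore equals $\sum_{s \leq \beta(b)} \partial_S m'(s)$, which by the M\"obius inversion formula on the finite poset $S$ is exactly $m'(\beta(b))$. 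Since $m = m' \circ \beta$, this evaluates to $m(b)$, as desired.

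The main obstacle, as I see it, is conceptual rather than computational: one must be comfortable interpreting the existence statement for $\partial_P m$ on a possibly infinite poset, where the formula \eqref{eq: MI formula} alone does not force uniqueness. Once one agrees that the natural candidate is a function with finite support on $P$, the proof reduces to the observation that the Galois adjunction identifies the principal downset $\downarrow b \subseteq P$ with the principal downset $\downarrow \beta(b) \subseteq S$ in a way perfectly compatible with the pushforward. No further machinery should be required.
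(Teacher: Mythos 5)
Your proof is correct and follows essentially the same route as the paper's: expand the pushforward, swap the order of summation, use the adjunction $\alpha(s)\leq b \iff s\leq\beta(b)$, and apply M\"obius inversion on $S$ together with $m = m'\circ\beta$. The added remarks on finite support and well-definedness are sound but not needed beyond what the paper already assumes.
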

\begin{proof}
    Since $m$ is $S$-constructible, there exists a co-closure operator $c: P \to P$ with image $S$ and satisfying $m = m \circ c$.
    By Proposition \ref{prop: coclosure}, the inclusion $\iota: S \hookrightarrow P$ has a right adjoint $\pi: P \to S$ with $c = \iota \circ \pi$.
    Now since $S$ is finite, $m|_S = m \circ \iota$ is M\"obius invertible.
    Therefore,
    \[
        m = m\circ c = m \circ \iota \circ \pi  = m|_S \circ \pi = \pi^\sharp (m|_S)
    \]
    and so, by RGCT, $\partial m = \iota_\sharp (\partial m|_S)$.
\end{proof}

%%%%%%%%%%%%%%%%%%%%%%%%%%%%%%%%%%%%%%%%%%%%%%%%%%%%%%%%%%%
\section{Persistence Modules and Persistence Diagrams}\label{sec: pmods-and-pdgms}
%%%%%%%%%%%%%%%%%%%%%%%%%%%%%%%%%%%%%%%%%%%%%%%%%%%%%%%%%%%

In this section, we introduce categories of persistence modules and persistence diagrams.
The main result in this section is Proposition \ref{prop:functoriality} which shows that the assignment of a persistence diagram to each constructible persistence module is functorial.

%%%%%%%%%%%%%%%%%%%%%%%%%%%%%%%%%%%%%%%%%%%%%%%%%%%%%%%%%%%
\subsection{Persistence Modules}\label{sec: pmods}
%%%%%%%%%%%%%%%%%%%%%%%%%%%%%%%%%%%%%%%%%%%%%%%%%%%%%%%%%%%

Persistence modules are the typical starting point for studying persistent homology from a theoretical point of view.
They arise in practice by applying homology to a filtration.
In this section we define persistence modules and morphisms of persistence modules.

Let $\Vec$ be the category of finite dimensional vector spaces over a fixed field $\kk$.

\begin{defn}\label{def: cat of mods}
    The \define{category of persistence modules} is the category $\Pmod$ with
    \begin{itemize}
        \item Objects: functors $\Mfunc: P \to \Vec$ for any poset $P$. We call these objects \define{persistence modules}.
        \item Morphisms: If $\Mfunc: P \to \Vec$ and $\Nfunc: Q \to \Vec$ are persistence modules, a morphism from $\Mfunc$ to $\Nfunc$ is a Galois insertion $f: P \leftrightarrows Q :g$ satisfying $\Mfunc \circ g \cong \Nfunc$.
    \end{itemize}
\end{defn}

Note that there is another frequently used notion of morphism between persistence modules: natural transformations \cite{induced_matching}.
There are two limitations with natural transformations as morphisms.
The first is that natural transformations only exist between persistence modules defined over the same poset.
The second is that natural transformations between persistence modules do not induce a functorial relationship between persistence diagrams~\cite[Proposition 5.10]{induced_matching}.
On the other hand, the category of persistence modules with natural transformations as morphisms enjoys some nice properties (in particular, it is an abelian category).
To avoid confusion with the morphisms in Definition \ref{def: cat of mods}, we use the notation $\Rightarrow$ to represent a natural transformation and $\to$ to represent a morphism.

\begin{ex}
    For any poset $P$ and any $a \in P$ we define the persistence module ${\kk^{\uparrow a} : P \to \Vec}$ by
    \[
        \kk^{\uparrow a} (b):= 
        \begin{cases} 
            \kk &\text{ if }  a \leq b\\
            0   &\text{otherwise}
        \end{cases}
    \]
    and $\kk^{\uparrow a}(b\leq c) = \id_\kk$ for all $a\leq b \leq c$.
\end{ex}

\begin{defn}\label{def: const fun}
    Let $P$ be a poset and let $S \subseteq P$ be a finite subposet.
    A persistence module $\Mfunc: P \to \Vec$ is $S$-\define{constructible} if there is a co-closure operator $c: P \to P$ with image $S$ such that $\Mfunc \circ c \cong \Mfunc$.
    We say that $\Mfunc$ is \define{constructible} if $\Mfunc$ is $S$-constructible for some finite subposet $S$.
\end{defn}

\begin{rmk}
    Note that, in general, there is no unique set $S$ such that a persistence module $\Mfunc:P \to \Vec$ is $S$-constructible. However, in Appendix~\ref{appendix: constructible}, we show that there is always a unique smallest such set.
\end{rmk}

\begin{defn}
    The \define{category of constructible persistence modules} is the full subcategory $\CPmod$ of $\Pmod$ with constructible persistence modules as objects.
\end{defn}

\begin{rmk}\label{rmk: no-const-on-R}
    Note that there are no constructible persistence modules defined on $\R$.
    This is due to the fact that there are no co-closure operators with finite images on $\R$.
    There are, however, plenty of constructible functors defined on the posets $[-\infty,\infty]$, $[-\infty,\infty)$ $[0,\infty]$, and $[0,\infty)$.
\end{rmk}

\begin{ex}\label{ex:module}
    Figure \ref{fig:module} shows a persistence module $\Mfunc : P \to \Vec$ where $P$ is the poset from Example \ref{ex:poset} and the map $\theta$ is given by $\theta: (x,y) \mapsto x+y$.
\end{ex}

\begin{figure}
    \[
    \begin{tikzcd}
        &0\\
        &\kk \ar[u]\\
        \kk^2 \ar[ur, "\theta"] & & \kk \ar[ul, "\id"']
    \end{tikzcd}
    \]
    \caption{
        A persistence module over $P$; see Example \ref{ex:module}.
        The map $\theta$ is defined as $\theta: (x,y) \mapsto x+y$.
    }
    \label{fig:module}
\end{figure}

%%%%%%%%%%%%%%%%%%%%%%%%%%%%%%%%%%%%%%%%%%%%%%%%%%%%%%%%%%%
\subsection{Persistence Diagrams}\label{sec: pd}
%%%%%%%%%%%%%%%%%%%%%%%%%%%%%%%%%%%%%%%%%%%%%%%%%%%%%%%%%%%

While persistence modules are the usual starting point for theoretical foundations of persistent homology, persistence diagrams are the typical end point.
Persistence diagrams offer a ``topological summary'' of filtrations or persistence modules, capturing where homological features are born and die.
In this section we construct persistence diagrams out of constructible persistence modules via nullity functions, see Definitions~\ref{defn: nullity function} and~\ref{def: dgm-of-a-mod}.
We organize persistence diagrams into a category and show that the map taking a constructible persistence module to its persistence diagram is functorial.

% %%%%%%%%%%%%%%%%%%%%%%%%%%%%%%%%%%%%%%%%%%%%%%%%%%%%%%%%%%%
% \subsection{Birth-Death and Nullity Functions}
% %%%%%%%%%%%%%%%%%%%%%%%%%%%%%%%%%%%%%%%%%%%%%%%%%%%%%%%%%%%

Recall from Definition \ref{def: posets-of-ints} that for a poset $P$, its poset of intervals is $\bar P$, the diagonal is $\Delta_P = \{(a,a) \mid a \in P \} \subseteq \bar P$, and the off-diagonal intervals are $\bar{P}^\circ = \bar P - \Delta_P$.
Moreover, any order-preserving map $f: P \to Q$ extends to a function $\bar f: \bar P \to \bar Q$ by applying $f$ component-wise.

\begin{defn}\label{def: diagrams}
    A \define{persistence diagram} is a finitely supported (not necessarily order-preserving) function $w : \bar P \to \Z$ for some poset $P$.
\end{defn}

\begin{defn}
    The \define{category of persistence diagrams}, denoted $\Dgm$, has
    \begin{itemize}
        \item Objects: persistence diagrams.
        \item Morphisms: If $w_0: \bar P \to \Z$ and $w_1: \bar Q \to \Z$ are persistence diagrams, a morphism from $w_0$ to $w_1$ is an order-preserving function $f: P \to Q$ such that $\bar f_\sharp (w_0)|_{\bar{Q}^\circ} = w_1 |_{\bar{Q}^\circ}$.
    \end{itemize}
\end{defn}

Note that we do not require the map $f$ defining a morphism to be a left adjoint of a Galois insertion.
This condition will often times be satisfied but it is not necessary to assume it.
Unpacking the definition of morphisms in $\Dgm$, we get that a morphism from $w_0: \bar P \to \Z$ to $w_1: \bar Q \to \Z$ is an order-preserving map $f: P \to Q$ such that, for all $J \in \bar Q^\circ$,
\[
    w_1(J) = \sum_{I \in \bar f^{-1}(J)} w_0(I).
\]
Intuitively, a morphism from $w_0$ to $w_1$ is a map $f$ such that $\bar f$ pushes-forward $w_0$ onto $w_1$ everywhere except for possibly the diagonal.
Two persistence diagrams $w_0: \bar P \to \Z$ and~${w_1: \bar Q \to \Z}$ are isomorphic in $\Dgm$ if there is an isomorphism of posets $f: P \to Q$ such that $w_0 (a,b) = w_1 \big( f(a),f(b) \big)$ for any $a < b \in P$.

\begin{defn}\label{defn: nullity function}
    For any persistence module $\Mfunc: P \to \Vec$, its \define{nullity function} is the function $\mathsf{null}_\Mfunc : \bar P \to \Z$ given by
    \[
        \mathsf{null}_\Mfunc (a,b) = \dim \big( \ker \Mfunc(a \leq b) \big).
    \]
\end{defn}

If $\Mfunc$ is a constructible persistence module, then $\nul_\Mfunc$ is a constructible function and therefore M\"obius invertible.
To see this, suppose that $\Mfunc$ is $S$-constructible for some finite subposet $S \subseteq P$.
Then there exists a co-closure operator $c: P \to P$ with $\image(c) = S$ and $\Mfunc \circ c \cong \Mfunc$.
Now, for any $a \leq b \in P$, we have
\[
    \nul_\Mfunc(a,b) = \dim \big( \ker \Mfunc(a \leq b) \big) = \dim \Big( \ker
        \Mfunc \big( c(a) \leq c(b) \big) \Big) = \nul_\Mfunc \circ \bar c (a,b).
\]
Therefore, $\nul_\Mfunc$ is $\bar S = \image(\bar c)$ constructible.

\begin{defn}\label{def: dgm-of-a-mod}
    Let $\Mfunc: P \to \Vec$ be a constructible persistence module.
    A \define{persistence diagram of $\Mfunc$} is any persistence diagram $w: \bar P \to \Z$ isomorphic to $\partial \mathsf{null}_\Mfunc : \bar P \to \Z$ under the identity map.
    That is, $w$ is a persistence diagram of $\Mfunc$ if $w(a,b) = \partial \mathsf{null}_\Mfunc (a,b)$ for any $a<b \in P$.
\end{defn}

\begin{rmk}[Connection to the Classical Persistence Diagrams]\label{rmk: relation to classical pd}
     The term \emph{persistence diagram} has a long history in topological data analysis, dating back over two decades~\cite{Edelsbrunner2002}. Our Definition~\ref{def: dgm-of-a-mod} should be understood as a generalization of this classical concept from the setting of filtrations indexed by a totally ordered set to that of modules indexed by a general poset.

     To be precise, when a persistence module arises from a standard filtration of a topological space over a totally ordered set, the persistence diagram produced by our definition coincides with the classical persistence diagram. A full justification of this fact requires the machinery of birth-death functions, which we develop in Section~\ref{sec: multiparameter}. We explore this equivalence in Remark~\ref{rmk: establish equivalence}
\end{rmk}

\begin{prop}[Functoriality]\label{prop:functoriality}
    There is a functor $\PD: \CPmod \to \Dgm$ assigning to each persistence module $\Mfunc$ the persistence diagram $\partial \mathsf{null}_\Mfunc$.
\end{prop}
\begin{proof}
    For a constructible persistence module $\Mfunc$, define $\PD(\Mfunc)$ as $\PD(\Mfunc) = \partial \mathsf{null}_\Mfunc$.
    Let $\Mfunc: P \to \Vec$ and $\Nfunc: Q \to \Vec$ be two constructible persistence modules with a morphism $f: P \leftrightarrows Q: g$ from $\Mfunc$ to $\Nfunc$.
    We will show that $f$ is a morphism from $\partial \mathsf{null}_\Mfunc$ to $\partial \mathsf{null}_\Nfunc$.
    By definition, $\Mfunc \circ g \cong \Nfunc$ and so $\mathsf{null}_{\Mfunc \circ g} (a,b) = \mathsf{null}_{\Nfunc} (a,b)$ for any $(a,b) \in \bar Q$.
    However $\mathsf{null}_{\Mfunc \circ g}(a,b) = \mathsf{null}_{\Mfunc} \big( g(a), g(b) \big)$ and therefore $\mathsf{null}_{\Mfunc} \circ \bar g = \mathsf{null}_\Nfunc$.
    Now RGCT (Theorem~\ref{thm:RGCT}) implies $\bar f_\sharp (\partial \mathsf{null}_\Mfunc) = \partial \mathsf{null}_\Nfunc$.
    Therefore, $f$ is a morphism from $\PD(\Mfunc)= \partial \mathsf{null}_\Mfunc$ to $\PD(\Nfunc) = \partial \mathsf{null}_\Nfunc$.
\end{proof}

\begin{prop}[Nonnegativity]\label{prop: positivity}
    If $P = \{ p_1 < ... < p_n \}$ is a finite totally ordered set and $\Mfunc : P \to \Vec$ is a persistence module then $\partial \nul_\Mfunc (I) \geq 0$ for all $I \in \bar P^\circ$. Moreover, the same conclusion holds for constructible functors $\Mfunc$ defined on a 
    (not necessarily finite) totally ordered set.
\end{prop}
\begin{proof}
    Suppose $I= (p_k,p_\ell) \in \bar{P}^\circ$.
    If $k=1$ then the M\"obius inversion formula reduces to $\partial \nul_\Mfunc(I) = \nul_\Mfunc(p_1,p_\ell) - \nul_\Mfunc(p_1,p_{\ell -1})$ and since ${\Mfunc(p_1 \leq p_\ell)}$ factors through ${\Mfunc(p_1 \leq p_{\ell -1})}$, we have $\nul_\Mfunc(p_1,p_\ell) \geq \nul_\Mfunc(p_1,p_{\ell -1})$.
    
    Now suppose $k > 1$.
    To simplify the notation, let $\alpha_0$, $\alpha_1$, and $\alpha_2$ be the following linear maps induced by $\Mfunc$
    \begin{center}
        \begin{tikzcd}
            \Mfunc(p_{k-1})\ar[r,"\alpha_0"]    &\Mfunc(p_k)\ar[r,"\alpha_1"]   &\Mfunc(p_{\ell-1})\ar[r, "\alpha_2"]   &\Mfunc(p_\ell).
        \end{tikzcd}
    \end{center}
    The M\"obius inversion formula on $\bar P$ breaks down as
    \begin{align*}
        \partial \nul_\Mfunc (I) &= \nul_\Mfunc(p_k,p_\ell) - \nul_\Mfunc (p_k,p_{\ell -1}) - \big(\nul_\Mfunc (p_{k-1},p_\ell)  - \nul_\Mfunc(p_{k-1},p_{\ell -1})\big)\\
        &= \dim \bigg( \frac{\ker(\alpha_2 \circ \alpha_1)}{\ker(\alpha_1)} \bigg) - \dim \bigg( \frac{\ker(\alpha_2 \circ \alpha_1 \circ \alpha_0)}{\ker(\alpha_1 \circ \alpha_0)} \bigg).
    \end{align*}
    Observe that
    \[
        \alpha_0 \big(\ker(\alpha_2 \circ \alpha_1 \circ \alpha_0) \big) \subseteq \ker(\alpha_2 \circ \alpha_1)
    \]
    and 
    \[
        \alpha_0 \big( \ker( \alpha_1 \circ \alpha_0 ) \big) \subseteq \ker(\alpha_1)
    \]
    so $\alpha_0$ induces a map 
    \[
        \varphi: \frac{\ker(\alpha_2 \circ \alpha_1 \circ \alpha_0)}{\ker(\alpha_1 \circ \alpha_0)} \to \frac{\ker(\alpha_2 \circ \alpha_1)}{\ker(\alpha_1)}.
    \]
    Now if $x \in \ker(\alpha_2 \circ \alpha_1 \circ \alpha_0)$ with $\alpha_0(x) \in \ker(\alpha_1)$ then $x \in \ker(\alpha_1 \circ \alpha_0)$ so $\varphi$ is injective.
    Therefore
    \[
        \dim \bigg( \frac{\ker(\alpha_2 \circ \alpha_1)}{\ker(\alpha_1)} \bigg) \geq \dim \bigg( \frac{\ker(\alpha_2 \circ \alpha_1 \circ \alpha_0)}{\ker(\alpha_1 \circ \alpha_0)} \bigg)
    \]
    and so $\partial \nul_\Mfunc(I) \geq 0$.

    The extension to constructible functors on an arbitrary totally ordered set follows from Proposition~\ref{prop: constructible MI}.
\end{proof}

%%%%%%%%%%%%%%%%%%%%%%%%%%%%%%%%%%%%%%%%%%%%%%%%%%%%%%%%%%%
\section{The Galois Interleaving and Bottleneck Distances}\label{sec: metrics}
%%%%%%%%%%%%%%%%%%%%%%%%%%%%%%%%%%%%%%%%%%%%%%%%%%%%%%%%%%%

The interleaving distance and the bottleneck distance are the two most commonly used metrics in persistent homology.
They are the foundation of a critical result in the field: the bottleneck stability theorem.
While it is known that the interleaving distance between two persistence modules and the bottleneck distance between their persistence diagrams are actually equal \cite{Lesnick2015}, the definitions of interleavings and matchings seem to have very little in common.
We show that interleavings and matchings are both closely related to the notion of coupling in optimal transport \cite{villani-optimal-transport}.

First, we establish some notation and a useful lemma.
\begin{defn}
    Let $(X, d_X)$ be a metric space and let $A$ be a set.
    The metric $d_X$ extends to a metric $d_\infty$ on functions from $A$ to $X$ by
    \[
        d_\infty (f,g) = \sup_{a \in A} d_X \big( f(a), g(a) \big).
    \]
\end{defn}

The metric $d_\infty$ depends on the metric space $(X,d_X)$ and the domain $A$; however, both of these parameters are typically clear from context and thus omitted from the notation.

\begin{lem}
    Let $(X, d_X)$ be a metric space and let $A$ and $B$ be sets.
    If $f: A \to B$ and~${g,h: B \to X}$ are functions then
    \[
        d_\infty( g \circ f, h \circ f) \leq d_\infty (g, h).
    \]
\end{lem}
\begin{proof}
    This follows from
    \begin{align*}
        d_\infty (g \circ f, h \circ f) &= \sup_{a \in A} d_X \Big( g\big( f(a)\big), h \big( f(a) \big) \Big)\\
        &= \sup_{b \in \image(f)} d_X \big( g(b), h(b) \big)\\
        &\leq \sup_{b \in B} d_X \big( g(b), h(b) \big)\\
        &= d_\infty (g, h).
    \end{align*}
\end{proof}

%%%%%%%%%%%%%%%%%%%%%%%%%%%%%%%%%%%%%%%%%%%%%%%%%%%%%%%%%%%
\subsection{Galois Interleavings}
%%%%%%%%%%%%%%%%%%%%%%%%%%%%%%%%%%%%%%%%%%%%%%%%%%%%%%%%%%%

Here we first present an existing definition of interleavings and then introduce a new definition of interleavings, building off of Bubenik et al. \cite{bubenik2017interleaving}, which we call \emph{Galois interleavings}.
We show that Galois interleavings induce the same metric as the classical definition for persistence modules defined on $[0,\infty)$ and highlight a deep, and previously unknown, connection between Galois interleavings and couplings in optimal transport.

\begin{defn}[Interleavings \cite{bubenik2017interleaving}]\label{def: peter-interleavings}
    For any $\ee > 0$, let $[0,\infty) \times_{\ee} \{0,1\}$ be the poset with underlying set $[0, \infty) \times \{0,1\}$ and order $(a,t) \leq (b,s)$ if $a + \ee |t-s| \leq b$.
    An $\ee$-\define{interleaving} is a functor $\Gamma: [0,\infty) \times_\ee \{0,1\} \to \Vec$ such that the following diagram commutes up to natural isomorphism
    \begin{center}
        \begin{tikzcd}
                &{[0,\infty)} \times_\ee \{0,1\}\ar[dd,"\Gamma"]\\
            {[0,\infty)}\ar[ur,"\iota_0",hook]\ar[dr,"\Mfunc"']  &   &{[0,\infty)}\ar[ul,"\iota_1"',hook]\ar[dl,"\Nfunc"]\\
                &\Vec.
        \end{tikzcd}
    \end{center}
\end{defn}

\begin{defn}[Galois Interleavings]\label{def: interleaving}
    Let $P$ and $Q$ be posets embedded in some metric space $(X,d_X)$.
    A \textbf{Galois interleaving} between two persistence modules $\Mfunc: P \to \Vec$ and $\Nfunc: Q \to \Vec$ consists of a persistence module $\Gamma : R \to \Vec$ with morphisms
    \begin{center}
        \begin{tikzcd}
                &\Gamma\ar[dl]\ar[dr]\\
            \Mfunc  &   &\Nfunc.
        \end{tikzcd}
    \end{center}
\end{defn}

    We can break down the definition of a Galois interleaving to see that $\Gamma$ is a Galois interleaving if there exist Galois insertions $(f_\Mfunc,g_\Mfunc)$ and $(f_\Nfunc,g_\Nfunc)$ such that the solid arrows in the following diagram commute up to natural isomorphism
    \begin{center}
        \begin{tikzcd}
                &R\ar[dl,"f_\Mfunc"', bend right, dashed]\ar[dr, "f_\Nfunc", bend left, dashed]\ar[dd,"\Gamma"]\\
            P\ar[dr,"\Mfunc"']\ar[ur, bend right,"g_\Mfunc"]   &   &Q\ar[dl,"\Nfunc"]\ar[ul, bend left, "g_\Nfunc"']\\
                &\Vec.
        \end{tikzcd}
    \end{center}

\begin{defn}
    The cost of a Galois interleaving $\Gamma$ is
    \[
        ||\Gamma|| = d_\infty (f_\Mfunc, f_\Nfunc) = \sup_{r \in R} \, d_X \big( f_\Mfunc (r), f_\Nfunc(r) \big).
    \]
    The \textbf{Galois interleaving distance} between $\Mfunc$ and $\Nfunc$ is
    \[
        d_I^G (\Mfunc,\Nfunc) = \inf_{\Gamma} ||\Gamma||
    \]
    where the infimum is taken over all Galois interleavings between $\Mfunc$ and $\Nfunc$.
\end{defn}

\begin{rmk}
    The fact that the Galois interleaving distance satisfies the triangle inequality will be proven in Subsection~\ref{subsec: triangle ineq}.
\end{rmk}

Typically, the posets $P$ and $Q$ will both be $[0,\infty)$ for one-parameter persistent homology or $[0,\infty)^n$ for multiparameter persistent homology so the assumption that $P$ and $Q$ are embedded in some common metric space is reasonable.
We also note that the Galois interleaving distance depends implicitly on the underlying metric space $(X, d_X)$.

\begin{rmk}\label{rmk: interleavings-as-couplings}
    The notion of Galois interleaving in Definition \ref{def: interleaving} is closely related to the notion of couplings between probability measures used in optimal transport.
    A coupling between two probability measures $\alpha$ and $\beta$ on $\R$ is a probability measure $\gamma$ on ${\R \times \R}$ with~${\gamma(A,\R)=\alpha(A)}$ and $\gamma(\R,B)=\beta(B)$ for any measurable sets $A$ and $B$.
    Rephrasing couplings in terms of cumulative distribution functions (CDFs) elucidates the similarity between Galois interleavings and couplings.
    
    Let
    \[
        F_\alpha, F_\beta : [-\infty,\infty] \to [0,1]
    \]
    be the CDFs of $\alpha$ and $\beta$ respectively.
    We can equivalently say that a measure $\gamma$ on $\R \times \R$ with CDF $F_\gamma : [-\infty,\infty]^2 \to [0,1]$ is a coupling if the following diagram commutes
    \begin{center}
        \begin{tikzcd}
                &{[-\infty,\infty]}^2\ar[dd,"F_\gamma"]\\
            {[-\infty,\infty]}\ar[ur,"\iota_0"]\ar[dr,"F_\alpha"']    &   &{[-\infty,\infty]}\ar[ul,"\iota_1"']\ar[dl,"F_\beta"]\\
                &{[0,1]}
        \end{tikzcd}
    \end{center}
    where $\iota_0: x \mapsto (x,\infty)$ and $\iota_1: x \mapsto (\infty,x)$.
    The maps $\iota_0$ and $\iota_1$ turn out to be right adjoints of Galois insertions.
    Their left adjoints are given by the two projection maps.
\end{rmk}

\begin{ex}\label{ex: interleaving}
    Let $\Mfunc_0,\Mfunc_1: [0,\infty) \to \Vec$ and $\Gamma$ be the persistence modules defined as
    \begin{align*}
        \Mfunc_0 (a) &= 
        \begin{cases}
            \kk     &\text{if } 0 \leq a < 1\\
            \kk^2   &\text{if } 1 \leq a < 2\\
            0       &\text{if } 2 \leq a
        \end{cases}\\
        \Mfunc_1 (a) &=
        \begin{cases}
            \kk     &\text{if } 0 \leq a < 2\\
            0       &\text{if } 2 \leq a
        \end{cases}\\
    \end{align*}
    with all of the induced maps $\Mfunc_0(a \leq b)$ and $\Mfunc_1(a \leq b)$ being full-rank.
    The domain of $\Gamma$ is the poset $[0,\infty) \times_1 \{0,1\}$ with order $(x,t) \leq (y,s)$ if $x + |t-s| \leq y$, as in Definition \ref{def: peter-interleavings}.
    The persistence module $\Gamma$ is defined as
    \begin{align*}
        \Gamma (t,0) &\cong 
        \begin{cases}
            \kk     &\text{if } 0 \leq t < 1\\
            \kk^2   &\text{if } 1 \leq t < 2\\
            0       &\text{if } t \geq 2
        \end{cases} \\
        \Gamma(t,1) &\cong
        \begin{cases}
            \kk     &\text{if } 0\leq t < 2\\
            0       &\text{if } t \geq 2
        \end{cases}
    \end{align*}
    with induced morphisms as shown in Figure \ref{fig: interleaving}.
    The morphisms from $\Gamma$ to $\Mfunc_0$ and $\Mfunc_1$ are given by the Galois insertions $f_t: (x,s) \mapsto x + |t-s|$ and $g_t: x \mapsto (x,t)$.
    The cost of $\Gamma$ is $1$.
    Note that this Galois interleaving does not minimize the cost function.
    The optimal interleaving here has cost $\frac{1}{2}$ and can be defined on the poset $[0, \infty) \times_{\frac{1}{2}} \{0,1\}$ in a similar fashion.
    \begin{figure}
        \centering
        \includegraphics[scale=1]{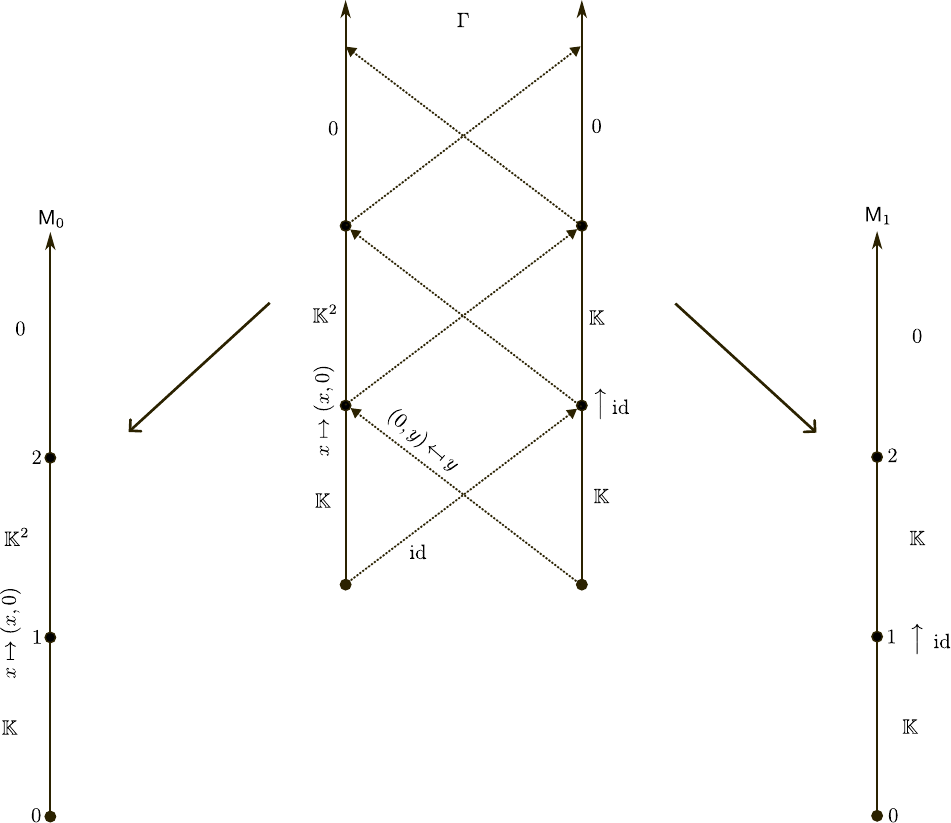}
        \caption{A Galois interleaving $\Gamma$ between the two persistence modules $\Mfunc_0$ and $\Mfunc_1$. See Example~\ref{ex: interleaving} for more details.}
        \label{fig: interleaving}
    \end{figure}
\end{ex}

The definition of Galois interleavings in \ref{def: interleaving} differs from the classical notion of interleaving in Definition \ref{def: peter-interleavings}; however, as the following proposition shows, both Galois interleavings and classical interleavings ultimately induce the same distance for persistence modules over~$[0,\infty)$.

\begin{prop}\label{prop: old interleaving}
    For any persistence modules $\Mfunc,\Nfunc: [0,\infty) \to \Vec$,
    \[
        d_I(\Mfunc, \Nfunc) = d_I^G(\Mfunc, \Nfunc).
    \]
\end{prop}
\begin{proof}
    Let $\Phi: [0,\infty) \times_\ee \{0,1\} \to \Vec$ be an $\ee$-interleaving between $\Mfunc$ and $\Nfunc$ as in Definition~\ref{def: peter-interleavings}.
    Then we have the following diagram that commutes up to natural isomorphism
    \begin{center}
        \begin{tikzcd}
                &{[0,\infty)} \times_\ee \{0,1\}\ar[dd,"\Phi"]\\
            {[0,\infty)}\ar[ur, hook,"\iota_0"]\ar[dr,"\Mfunc"']  &   &{[0,\infty)}\ar[ul, hook,"\iota_1"']\ar[dl,"\Nfunc"]\\
                &\Vec.
        \end{tikzcd}
    \end{center}
    These $\ee$-interleavings turn out to be Galois interleavings as well.
    The maps $\iota_0$ and $\iota_1$ have left adjoints $\pi_0$ and $\pi_1$ defined by $\pi_i: (x,t) \mapsto x + \ee |i-t|$ and so $\Phi$ induces a Galois interleaving.
    The cost of $\Phi$ is $d_\infty (\pi_0, \pi_1) = \ee$ which implies that $d_I^G(\Mfunc,\Nfunc) \leq d_I(\Mfunc,\Nfunc)$.

    To see the reverse inequality, let 
    \begin{center}
        \begin{tikzcd}
                &R\ar[dl,"f_0"', bend right, dashed]\ar[dr, "f_1", bend left, dashed]\ar[dd,"\Gamma"]\\
            {[0,\infty)}\ar[dr,"\Mfunc"']\ar[ur, bend right,"g_0"]   &   &{[0,\infty)}\ar[dl,"\Nfunc"]\ar[ul, bend left, "g_1"']\\
                &\Vec
        \end{tikzcd}
    \end{center}
    be a Galois interleaving between $\Mfunc$ and $\Nfunc$ with cost $\ee$.
    Define $[0,\infty) \times_R \{0,1\}$ as the preordered set with underlying set $[0,\infty) \times \{0,1\}$ and order $(x,t) \leq (y,s)$ if $g_t(x) \leq g_s(y)$.
    Let~${\Psi: [0,\infty) \times_R \{0,1\} \to \Vec}$ be the functor $\Psi(x,t):= \Gamma(g_t(x))$.
    Both $[0,\infty) \times_\ee \{0,1\}$ and $[0,\infty) \times_R \{0,1\}$ have the same underlying sets so let $\alpha: [0,\infty) \times_\ee \{0,1\} \to [0,\infty) \times_R \{0,1\}$ be the identity map.
    We proceed by showing that $\alpha$ is order-preserving.
    This will allow us to precompose the functor $\Psi$ with $\alpha$ to obtain an $\ee$-interleaving.
    
    To show that the identity map from $[0,\infty) \times_\ee \{0,1\}$ to $[0,\infty) \times_R \{0,1\}$ is order-preserving, it suffices to show that, for any $(x,t)\in [0,\infty)\times_R\{0,1\}$, $(x,t)\leq (x+\epsilon,1-t) \in [0,\infty)\times_R\{0,1\}$.
    Since $(f_{1-t}, g_{1-t})$ is a Galois connection, $g_{1-t} \circ f_{1-t}$ is inflationary (by Lemma~\ref{lem: inf and def}) and so for any $x \in [0,\infty)$, 
    \[
        g_t(x) \leq g_{1-t} \Big( f_{1-t} \big( g_t (x) \big) \Big).
    \]
    Therefore, $(x,t) \leq \Big( f_{1-t} \big( g_t(x) \big) , 1-t \Big)$ in $[0,\infty) \times_R \{0,1\}$ and so it now suffices to show that $f_{1-t} \big( g_t (x) \big) \leq x + \ee$ in $[0,\infty)$.
    Since $(f_t,g_t)$ is a Galois insertion, $x = f_t \big ( g_t(x) \big)$ and so $|x - f_{1-t} \big( g_t(x) \big)| = |f_t \big ( g_t(x) \big) - f_{1-t} \big( g_t(x) \big) | \leq d_\infty (f_t, f_{1-t}) = \ee$.

    Now we have the diagram
    \begin{center}
        \begin{tikzcd}
                &{[0,\infty)} \times_\ee \{0,1\}\ar[dd,"\Psi \circ \alpha"]\\
            {[0,\infty)}\ar[ur, hook,"\iota_0"]\ar[dr,"\Mfunc"']  &   &{[0,\infty)}\ar[ul, hook,"\iota_1"']\ar[dl,"\Nfunc"]\\
                &\Vec
        \end{tikzcd}
    \end{center}
    which commutes (up to natural isomorphism) since, by construction of $\Psi$,
    \[
        \Psi \circ \alpha \circ \iota_0 = \Gamma \circ g_0 \cong \Mfunc \hspace{4mm} \text{and} \hspace{4mm} \Psi \circ \alpha \circ \iota_1 = \Gamma \circ g_1 \cong \Nfunc. 
    \]
    Therefore, $d_I^G (\Mfunc, \Nfunc) \geq d_I (\Mfunc, \Nfunc)$.
\end{proof}

%%%%%%%%%%%%%%%%%%%%%%%%%%%%%%%%%%%%%%%%%%%%%%%%%%%%%%%%%%%
\subsection{Matchings}\label{subsec: matchings}
%%%%%%%%%%%%%%%%%%%%%%%%%%%%%%%%%%%%%%%%%%%%%%%%%%%%%%%%%%%
%\textcolor{red}{matching when P=Q=X. got rid of the therm 'equivalaece class of diagrams'.}

Just as Galois interleavings provide a means of measuring distance between persistence modules, matchings offer a way of measuring distance between persistence diagrams.
Matchings, much like Galois interleavings, are closely related to the notion of couplings in measure theory \cite{DivolMeasures}.
In this section, we assume that $P$ is a poset with a metric $d_P$. 
We extend $d_P$ to a metric $d_{\bar P}$ on the poset $\bar P$ by defining
\[
    d_{\bar P} (I, J) = \max \{ d_P( i_0, j_0), d_P( i_1, j_1) \}
\]
for any $I = (i_0, i_1)$ and $J = (j_0, j_1)$ in $\bar P$.

\begin{defn}\label{def: matching}
%    Let $P$ and $Q$ be two posets embedded in some metric space $(X,d_X)$.
    A \define{matching} between two persistence diagrams $w_0, w_1 : \bar P \to \Z$ consists of a nonnegative persistence diagram $\nu : \bar{R} \to \Z$ for some poset $R$ with morphisms
    \begin{center}
        \begin{tikzcd}
                &{\nu} \ar[dl]\ar[dr]\\
            {w_0}  &   &{w_1}.
        \end{tikzcd}
    \end{center}
    \end{defn}
    Breaking this down, a matching is a nonnegative, finitely supported function~${\nu : \bar{R} \to \Z}$ with order-preserving maps $f_0 : R \to P$ and $f_1 : R \to P$ such that, for all $I \in \bar P^\circ$,
    \begin{align*}
        w_0(I) &= \sum_{J \in \bar f_0^{-1}(I)} \nu(J) \text{ and,}\\
        w_1(I) &= \sum_{J \in \bar f_1^{-1}(I)} \nu(J).
    \end{align*}

\begin{rmk}
    We define matchings between persistence diagrams only when their domains are the same.
    This assumption is typically satisfied in persistent homology as the domains will usually be $\overline{[0, \infty)}$ for one-parameter persistent homology and $\overline{[0,\infty)^n}$ for multiparameter persistent homology.
    This is more restrictive than Galois interleavings which are defined between persistence modules over potentially different posets embedded into a common metric space.
    The need for this restriction arises from subtleties with the diagonal.
\end{rmk}

\begin{defn} \label{def:bottleneck-distance}
    Let $w_0, w_1: \bar P \to \Z$ be persistence diagrams with a matching ${w_0 \xleftarrow{f_0} \nu \xrightarrow{f_1} w_1}$.
    If $\nu$ is not identically 0, we define the cost of $\nu$ as
    \[
        ||\nu|| = \max_{I \in \supp(\nu)} d_{\bar P} \big(\bar f_0(I), \bar f_1(I) \big)
    \]
    where $\text{supp}(\nu)$ is the support of $\nu$.
    If $\nu$ is identically 0, then we define $||\nu|| = 0$.
    The \define{bottleneck distance} between $w_0$ and $w_1$ is
    \[
        d_B(w_0, w_1) = \inf_{\nu} ||\nu||
    \]
    where the infimum is taken over all matchings $\nu$ between $w_0$ and $w_1$.
\end{defn}

For a matching $w_0 \xleftarrow{f_0} \nu \xrightarrow{f_1} w_1$, we make frequent use of the inequality
\[
    ||\nu|| \leq d_\infty (f_0, f_1 ).
\]

\begin{rmk} \label{rmk:matchings-equivalence}
    We have defined matchings between two persistence diagrams $w_0, w_1: \bar P \to \Z$ as having domain $\bar R$ for some poset $R$.
    However, it is often more intuitive to think of a matching between $w_0$ and $w_1$ as a finitely supported, nonnegative function $\eta: \bar P \times \bar P \to \Z$ such that, for all $I \in \bar P^\circ$,
    \begin{align}
        w_0(I) &= \sum_{J \in \bar P} \eta(I, J) \text{ and,} \label{eq:alt-matching1}\\
        w_1(I) &= \sum_{J \in \bar P} \eta(J, I). \label{eq:alt-matching2}
    \end{align}
    The cost of $\eta$ can then be defined as
    \[
        ||\eta||' = \max_{(I, J) \in \supp(\eta)} d_{\bar P}(I, J).
    \]
    This then induces a distance on persistence diagrams over $\bar P$ by taking the infimum of $||\eta||'$ over all such $\eta$.

    This alternative approach to defining a distance between persistence diagrams was used in~\cite{bottleneck} and ultimately induces the same metric as the bottleneck distance in Definition~\ref{def:bottleneck-distance}.
    To see this, we show that for any such $\eta$, there exists a matching $\nu$ between $w_0$ and $w_1$ with $||\nu|| = ||\eta||'$ and conversely, for any matching $\nu$ between $w_0$ and $w_1$, there exists such an $\eta$ with $||\eta||' = ||\nu||$.

    Suppose that $\eta: \bar P \times \bar P \to \Z$ is a finitely supported, nonnegative function satisfying Equations \eqref{eq:alt-matching1} and \eqref{eq:alt-matching2}.
    Let $\proj_0, \proj_1: P \times P \to P$ be the natural projection maps.
    We define~$\nu: \overline{P \times P} \to \Z$ as $\nu = \eta \circ (\bar \proj_0, \bar \proj_1)$.
    Then for any $I \in \bar P^\circ$,
    \begin{align*}
        \sum_{((a_0,b_0),(a_1,b_1)) \in \bar \proj_0^{-1}(I)} \nu((a_0,b_0),(a_1,b_1)) &=
        \sum_{((a_0,b_0),(a_1,b_1)) \in \bar \proj_0^{-1}(I)} \eta((a_0,a_1),(b_0, b_1))\\
        &= \sum_{(b_0,b_1) \in \bar P} \eta(I, (b_0,b_1))\\
        &= w_0(I).
    \end{align*}
    This, together with the same argument for $w_1$, shows that $w_0 \xleftarrow{\proj_0} \nu \xrightarrow{\proj_1} w_1$ is a matching.
    Observe that $(\bar \proj_0, \bar \proj_1): \overline{P \times P} \to \bar P \times \bar P$ is a bijection and so, $\supp(\eta) = (\bar \proj_0, \bar \proj_1)(\supp(\nu))$.
    Therefore, the cost of $\nu$ is
    \begin{align*}
        ||\nu|| &= \max_{ K \in \supp(\nu)} d_{\bar P} \big( \bar \proj_0(K), \bar \proj_1(K) \big)\\
        &= \max_{(I,J) \in \supp(\eta)} d_{\bar P} (I, J)\\
        &= ||\eta||'.
    \end{align*}

    Now suppose that $w_0 \xleftarrow{f_0} \nu \xrightarrow{f_1} w_1$ is a matching.
    Define $\eta: \bar P \times \bar P \to \Z$ as 
    \[
        \eta = (\bar f_0, \bar f_1)_\sharp (\nu).
    \]
    That $\eta$ is nonnegative follows immediately from $\nu$ being nonnegative.
    Since $\nu$ is nonnegative and $\eta$ is a pushforward of $\nu$,
    \begin{equation} \label{eq:alt-matching3}
        \supp(\eta) = (\bar f_0, \bar f_1) \big(\supp(\nu) \big).
    \end{equation}
    Thus, $\eta$ is finitely supported.
    For any $I \in \bar P^\circ$, we have
    \begin{align*}
        \sum_{J \in \bar P} \eta(I,J) &= \sum_{J \in \bar P} \, \sum_{K \in (\bar f_0, \bar f_1)^{-1}(I, J)} \nu(K)\\
        &= \sum_{K \in \bar f_0^{-1}(I)} \nu(K)\\
        &= w_0(I)
    \end{align*}
    which establishes Equation \eqref{eq:alt-matching1}.
    Equation \eqref{eq:alt-matching2} follows from the same argument.
    Finally, Equation \eqref{eq:alt-matching3} implies
    \begin{align*}
        ||\eta||' &= \max_{(I,J) \in \supp(\eta)} d_{\bar P} (I, J)\\
        &= \max_{K \in \supp(\nu)} d_{\bar P} \big(\bar f_0(K), \bar f_1(K) \big)\\
        &= ||\nu||.
    \end{align*}

    While both approaches to the bottleneck distance are equivalent, we focus primarily on the one in Definition \ref{def:bottleneck-distance} for two main reasons.
    First, it more closely parallels Galois interleavings; both Galois interleavings and matchings are spans in their respective categories.
    Second, allowing matchings to have arbitrary interval posets as domains provides extra flexibility that we exploit in our proof of the bottleneck stability theorem (Theorem~\ref{thm:bottleneck stability}).
    Ultimately, both approaches have their advantages and we exploit the equivalence between them when helpful.
\end{rmk}

\begin{rmk}\label{rmk: matching is insensitive to diagonal}
    The values of matchings along the diagonal can be altered arbitrarily while preserving the matching condition and maintaining a bound on the cost.
    Explicitly, let $w_0, w_1: \bar P \to \Z$ and $\nu: \bar R \to \Z$ be persistence diagrams with a matching 
    \[
    \begin{tikzcd}
            &\nu \ar[dl, "f_0"'] \ar[dr, "f_1"] \\
        w_0 &   &w_1.
    \end{tikzcd}
    \]
    If $\gamma: \bar R \to \Z$ is a nonnegative, finitely supported function with $\gamma(I) = \nu(I)$ for all $I \in \bar R^\circ$ then
    \[
    \begin{tikzcd}
            &\gamma \ar[dl, "f_0"'] \ar[dr, "f_1"] \\
        w_0 &   &w_1
    \end{tikzcd}
    \]
    is also a matching between $w_0$ and $w_1$ with $||\gamma|| \leq d_\infty(f_0, f_1)$.
    This follows immediately from the fact that $\bar f_0$ and $\bar f_1$ are defined componentwise and therefore map $\Delta_R$ into $\Delta_P$.
\end{rmk}

\begin{ex}\label{ex: neg-matchings}
    If one allowed matchings to take negative values, the bottleneck distance between any two persistence diagrams would always be $0$.
    For the sake of clarity, we use the equivalent notion of matching from Remark \ref{rmk:matchings-equivalence} here.
    Let $w_0: \overline{[0,\infty)} \to \Z$ be the persistence diagram with $w_0(0,5)=1$ and 0 elsewhere as shown in Figure \ref{fig: negative-matchings}.
    Similarly, let $w_1: \overline{[0,\infty)} \to \Z$ be the persistence diagram with $w_1(3,5)=1$ and 0 elsewhere.
    We construct a function $\eta: \overline{ [0,\infty)} \times \overline{ [0,\infty)} \to \Z$ by 
    \begin{align*}
        \eta \big( (0,5), (1,5) \big) &=1\\
        \eta \big( (1,5), (2,5) \big) &=-1\\
        \eta \big( (2,5), (3,5) \big) &=1
    \end{align*}
    and 0 elsewhere.
    This function $\eta$ satisfies all of the properties necessary to induce a matching between $w_0$ and $w_1$ apart from nonnegativity.
    The cost of $\eta$ is
    \[
        ||\eta||' = \max_{(I,J) \in \supp(\eta)} d_{\bar P}(I,J) = 1.
    \]
    Moreover, this process can be refined, breaking up the line segment between $(0,5)$ and $(3,5)$ into arbitrarily small intervals, producing ``matchings" with arbitrarily small costs. Hence, the bottleneck distance between $w_0$ and $w_1$ would be $0$.
    \begin{figure}
        \centering
        \includegraphics[scale=1]{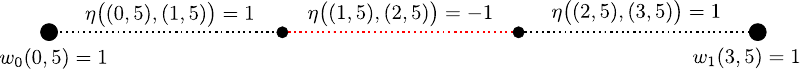}
        \caption{This example shows that nonnegativity is crucial in the definition of a matching. See Example \ref{ex: neg-matchings}.}
        \label{fig: negative-matchings}
    \end{figure}
\end{ex}

%%%%%%%%%%%%%%%%%%%%%%%%%%%%%%%%%%%%%%%%%%%%%%%%%%%%%%%%%%%
\subsection{The Triangle Inequalities}\label{subsec: triangle ineq}
%%%%%%%%%%%%%%%%%%%%%%%%%%%%%%%%%%%%%%%%%%%%%%%%%%%%%%%%%%%
In this section, we show that the Galois interleaving and bottleneck distances satisfy the triangle inequalities.

\begin{prop}[Triangle inequality for the Galois interleaving distance]
    Let $P_0, P_1, P_2$ be posets embedded into some metric space $(X,d_X)$ and let $\Mfunc_0: P_0 \to \Vec$, $\Mfunc_1:P_1 \to \Vec$, and $\Mfunc_2: P_2 \to \Vec$ be persistence modules.
    Then $d_I^G(\Mfunc_0, \Mfunc_2) \leq d_I^G(\Mfunc_0,\Mfunc_1) + d_I^G(\Mfunc_1, \Mfunc_2)$.
\end{prop}

\begin{proof}
    Suppose there are Galois interleavings
    \begin{center}
        \begin{tikzcd}
                &\Gamma_0\ar[dl]\ar[dr]   &   &\Gamma_1\ar[dl]\ar[dr]\\
            \Mfunc_0    &   &\Mfunc_1   &   &\Mfunc_2.
        \end{tikzcd}
    \end{center}
    Let $Q_0$ be the domain of $\Gamma_0$ and $Q_1$ be the domain of $\Gamma_1$.
    By the definition of Galois interleavings, we have Galois insertions
    \begin{center}
        \begin{tikzcd}
                &Q_0\ar[dr, "f_0"', bend right]\ar[dl, bend left, "f_{-1}"]     &   &Q_1\ar[dl, "f_1", bend left]\ar[dr, bend right, "f_2"']\\
             P_0\ar[ur, bend left, "g_{-1}"]    &   &P_1\ar[ul, "g_0"', bend right]\ar[ur, "g_1", bend left]    &   &P_2\ar[ul, bend right, "g_2"']
        \end{tikzcd}
    \end{center}
    such that 
    \begin{align*}
    \Gamma_0 \circ g_{-1} &\cong \Mfunc_0\\
    \Gamma_0 \circ g_0 &\cong \Mfunc_1 \cong \Gamma_1 \circ g_1\\
    \Gamma_1 \circ g_2 &\cong \Mfunc_2.
    \end{align*}
    Fix a natural isomorphism $\eta_{0,1}: \Gamma_0 \circ g_0 \to \Gamma_1 \circ g_1$ and let $\eta_{1,0}$ be its inverse.
    Recall from Definition~\ref{def: pushout-poset} the preordered set $Q_0 \sqcup_{P_1} Q_1$
    and from Proposition \ref{prop: concatenation insertion}, the Galois insertions $(\pi_0,\iota_0)$ and $(\pi_1, \iota_1)$
    \begin{center}
        \begin{tikzcd}
                &   &Q_0 \sqcup_{P_1} Q_1\ar[dl, bend left, "\pi_0"']\ar[dr, bend right, "\pi_1"]\\
                &Q_0\ar[dr, "f_0"', bend right]\ar[dl, bend left, "f_{-1}"]\ar[ur, bend left, "\iota_0"]     &   &Q_1\ar[dl, "f_1", bend left]\ar[dr, bend right, "f_2"']\ar[ul, bend right, "\iota_1"']\\
             P_0\ar[ur, bend left, "g_{-1}"]    &   &P_1\ar[ul, "g_0", bend right]\ar[ur, "g_1"', bend left]    &   &P_2.\ar[ul, bend right, "g_2"']
        \end{tikzcd}
    \end{center}
    Additionally, recall from Proposition \ref{prop: concatenation insertion} that the downwards arrows in the above diagram commute.
    That is, $f_0 \circ \pi_0 = f_1 \circ \pi_1$.
    
    We define a functor
    \[
        \Phi: Q_0 \sqcup_{P_1} Q_1 \to \Vec
    \]
    that induces a Galois interleaving between $\Mfunc_0$ and $\Mfunc_2$.
    For any $(a,t) \in Q_0 \sqcup_{P_1} Q_1$, let~${\Phi (a,t) = \Gamma_t(a)}$ and for any $(b,s) \succeq (a,t)$ we define the map from $\Phi(a,t)$ to $\Phi(b,s)$ by
    \begin{itemize}
        \item if $t=s$, then $\Phi \big( (a,t) \preceq (b,t) \big) = \Gamma_t ( a \leq b )$ or
        \item if $t \neq s$, then define $\Phi\big( (a,t) \preceq (b,s) \big)$ as the composition of the maps
    \end{itemize}
    \begin{center}
        \begin{tikzcd}
            \Gamma_t(a) \ar[r]  &\Gamma_t \Big( g_t \big( f_t (a) \big) \Big) \ar[r,"\eta_{t,s}"]    &\Gamma_s \Big( g_s\big( f_t(a) \big) \Big) \ar[r]  &\Gamma_s(b).
        \end{tikzcd}
    \end{center}
    To establish that $\Phi$ is a functor, we need to show that if $(a,t) \preceq (b,s) \preceq (c,u)$ then
    \begin{equation}\label{eq: functoriality}
        \Phi \big( (b,s) \preceq (c,u) \big) \circ \Phi \big( (a,t) \preceq (b,s) \big) = \Phi \big( (a,t) \preceq (c,u) \big).
    \end{equation}
    The case where $t=s=u$ follows immediately from the functoriality of $\Gamma_t$.
    If $t=s \neq u$, then Equation \eqref{eq: functoriality} follows from the commutative diagram
    \begin{center}
        \begin{tikzcd}
            \Gamma_t(b) \ar[r]  &\Gamma_t \Big( g_t \big( f_t (b) \big) \Big) \ar[r,"\eta_{t,u}"]    &\Gamma_u \Big( g_u\big( f_t(b) \big) \Big) \ar[r]  &\Gamma_u(c)\\
            \Gamma_t(a) \ar[r]\ar[u]  &\Gamma_t \Big( g_t \big( f_t (a) \big) \Big) \ar[r,"\eta_{t,u}"]\ar[u]    &\Gamma_u \Big( g_u\big( f_t(a) \big) \Big) \ar[r]\ar[u]  &\Gamma_u(c). \ar[u,"\cong"]
        \end{tikzcd}
    \end{center}
    Here the left and right squares commute by the functoriality of $\Gamma_t$ and $\Gamma_u$ respectively while the middle square commutes since $\eta_{t,u}$ is a natural transformation.
    The case where $t\neq s =u$ follows from a similar argument.

    If $t=u \neq s$ then the following commutative diagram implies Equation \eqref{eq: functoriality}
    \begin{center}
        \begin{tikzcd}
            &\Gamma_t \Big( g_t \big( f_t(a) \big) \Big)\ar[rr]   &    &\Gamma_t \Big( g_t \big( f_s(b) \big) \Big) \ar[r]  &\Gamma_t(c)\\
            &\Gamma_s \Big( g_s \big( f_t(a) \big) \Big) \ar[r]\ar[u,"\eta_{t,s}^{-1}"] &\Gamma_s (b) \ar[r]    &\Gamma_s \Big( g_s \big( f_s(b) \big) \Big) \ar[u,"\eta_{t,s}^{-1}"]\\
            \Gamma_t(a) \ar[r]   &\Gamma_t \Big( g_t \big( f_t(a) \big) \Big)\ar[rr]\ar[u,"\eta_{t,s}"]   &    &\Gamma_t \Big( g_t \big( f_s(b) \big) \Big).\ar[u,"\eta_{t,s}"]
        \end{tikzcd}
    \end{center}
    The top and bottom paths in this diagram both reduce to $\Gamma_t (a \leq_t c) = \Phi \big( (a,t) \preceq (c,t) \big)$ while the middle path is $\Phi \big( (b,s) \preceq (c,t) \big) \circ \Phi \big( (a,t) \preceq (b,s) \big)$.
    This proves that $\Phi$ is a functor.
    By construction, we have $\Phi \circ \iota_t \cong \Gamma_t$ for $t \in \{0,1\}$.
    Next we deal with the subtlety of $Q_0 \sqcup_{P_1} Q_1$ being a preordered set rather than a poset.

    Every preordered set (viewed as a category) is equivalent to a poset.
    Thus, there exists a poset $R$ and order-preserving maps $\Ffunc: Q_0 \sqcup_{P_1} Q_1 \to R$ and $\Gfunc: R \to Q_0 \sqcup_{P_1} Q_1$ with~${\Ffunc \circ \Gfunc \cong \id_R}$ and $\Gfunc \circ \Ffunc \cong \id_{Q_0 \sqcup_{P_1} Q_1}$.
    Define the persistence module $\Psi: R \to \Vec$ as $\Psi = \Phi \circ \Gfunc$.
    Since equivalences of categories are both left and right adjoints, the order preserving maps
    \begin{align*}
        f_{-1} \circ \pi_0 \circ \Gfunc:R &\leftrightarrows P_0: \Ffunc \circ \iota_0 \circ g_{-1}\\
        f_{2} \circ \pi_1 \circ \Gfunc:R &\leftrightarrows P_2: \Ffunc \circ \iota_1 \circ g_{2}
    \end{align*}
    form Galois insertions.
    That these Galois insertions form a Galois interleaving follows from
    \begin{align*}
        \Psi \circ \Ffunc \circ \iota_0 \circ g_{-1} &= \Phi \circ \Gfunc \circ \Ffunc \circ \iota_0 \circ g_{-1}\\
        &\cong \Phi \circ \iota_0 \circ g_{-1}\\
        &\cong \Gamma_0 \circ g_{-1}\\
        &\cong \Mfunc_0
    \end{align*}
    together with the analogous argument that $\Psi \circ \Ffunc \circ \iota_1 \circ g_2 \cong \Mfunc_2$.
    Therefore, $\Psi$ forms a Galois interleaving between $\Mfunc_0$ and $\Mfunc_2$.
    The cost of this interleaving is
    \begin{align*}
        ||\Psi|| &= d_\infty (f_{-1} \circ \pi_0 \circ \Gfunc, f_2 \circ \pi_1 \circ \Gfunc )\\
        &\leq d_\infty (f_{-1} \circ \pi_0, f_2 \circ \pi_1)\\
        &\leq d_\infty (f_{-1} \circ \pi_0, f_0 \circ \pi_0) + d_\infty (f_0 \circ \pi_0, f_2 \circ \pi_1)\\
        &= d_\infty (f_{-1} \circ \pi_0, f_0 \circ \pi_0) + d_\infty (f_1 \circ \pi_1, f_2 \circ \pi_1)\\
        &\leq d_\infty (f_{-1}, f_0) + d_\infty (f_1, f_2)\\
        &= ||\Gamma_0|| + ||\Gamma_1||.
    \end{align*}
\end{proof}

Now we show that the bottleneck distance also satisfies the triangle inequality.
See Example \ref{ex: composing-matchings} for an illustration.
This result is analogous to the gluing lemma in measure theory \cite{villani-optimal-transport}.

\begin{figure}
    \centering
    \includegraphics[scale=1]{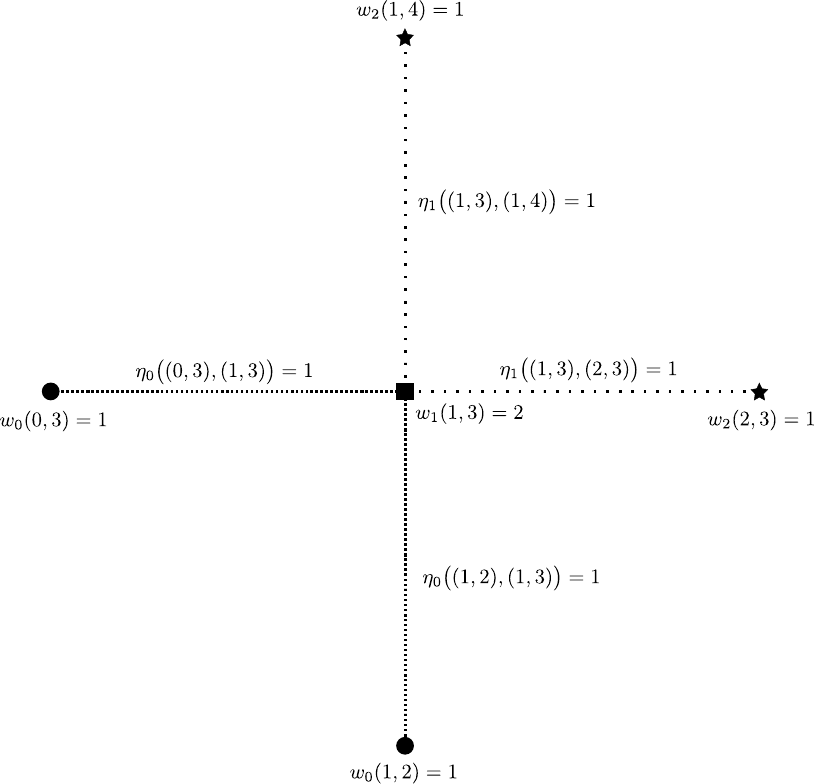}
    \caption{
        Above are three persistence diagrams $w_0$, $w_1$, and $w_2$ with functions $\eta_0,\eta_1$.
        By Remark~\ref{rmk:matchings-equivalence}, $\eta_0$ induces a matching between $w_0$ and $w_1$ and $\eta_1$ induces a matching between $w_1$ and $w_2$.
        There is no unique way to glue $\eta_0$ and $\eta_1$ together.
        Instead, our algorithm proceeds by choosing $(1,3)$ in the support of $w_1$ and choosing  points (say, $(0,3)$ and $(2,3)$) matched to $(1,3)$ by $\eta_0$ and $\eta_1$ respectively.
        The algorithm then matches $(0,3)$ with $(2,3)$ and removes them from the picture.
        This process continues until the original diagrams are empty.
    }
    \label{fig: composing-matchings}
\end{figure}

\begin{lem} \label{lem:pushforward-matchings}
    Let $(P, d_P)$ be a metric poset and let $w_0, w_1, w_2: \bar P \to \Z$ be persistence diagrams with matchings $w_0 \xleftarrow{f_0} \nu_0 \xrightarrow{f_1} w_1$ and $w_1 \xleftarrow{f_2} \nu_1 \xrightarrow{f_2} w_2$.
    Then there exist persistence diagrams $u_0, u_1, u_2: \bar P \to \Z$ and finitely supported, nonnegative functions $\eta_0, \eta_1: \bar P \times \bar P \to \Z$ such that,
    \begin{enumerate}
        \item  for any $i \in \{0,1,2\}$, $u_i|_{\bar P^\circ} = w_i|_{\bar P^\circ}$, \label{enum:matching1}
        \item for any $I \in \bar P$ (including $\Delta_P$),
        \begin{align*}
            u_0(I) &= \sum_{J \in \bar P} \eta_0(I, J)\\
            u_1(I) &= \sum_{J \in \bar P} \eta_0(J, I) = \sum_{J \in \bar P} \eta_1(I, J)\\
            u_2(I) &= \sum_{J \in \bar P} \eta_1(J, I),
        \end{align*}
        \label{enum:matching2}
        \item
        \[
            \max_{(I,J) \in \supp(\eta_0)} d_{\bar P}(I, J) = ||\nu_0|| \quad \text{and} \quad \max_{(I,J) \in \supp(\eta_1)} d_{\bar P}(I, J) = ||\nu_1||.
        \] \label{enum:matching3}
    \end{enumerate}
\end{lem}
\begin{proof}
    By Remark \ref{rmk:matchings-equivalence}, the matchings $\nu_0$ and $\nu_1$ induce finitely supported, nonnegative functions $\eta_0', \eta_1': \bar P \times \bar P \to \Z$ such that, for any $I \in \bar P^\circ$,
    \begin{align*}
            w_0(I) &= \sum_{J \in \bar P} \eta_0'(I, J)\\
            w_1(I) &= \sum_{J \in \bar P} \eta_0'(J, I) = \sum_{J \in \bar P} \eta_1'(I, J)\\
            w_2(I) &= \sum_{J \in \bar P} \eta_1'(J, I),
    \end{align*}
    and
    \begin{align*}
        \max_{(I,J) \in \supp(\eta_0')} d_{\bar P}(I, J) &= ||\nu_0||\\
        \max_{(I,J) \in \supp(\eta_1')} d_{\bar P}(I, J) &= ||\nu_1||.
    \end{align*}
    Define $u_1: \bar P \to \Z$ as
    \[
        u_1(I) = \max \Big\{ \sum_{J \in \bar P} \eta_0'(J, I),\, \sum_{J \in \bar P} \eta_1'(I, J) \Big\}.
    \]
    For any $I \in \bar P^\circ$, both terms in the maximum coincide with $w_1(I)$ so $u_1|_{\bar P^\circ} = w_1|_{\bar P^\circ}$.
    Since $u_1$ is the maximum of two nonnegative, finitely supported functions, it is also nonnegative and finitely supported.
    
    Next, we define the function $\kappa_0: \bar P \times \bar P \to \Z$ as
    \[
        \kappa_0(I,J) =
        \begin{cases}
            u_1(I) - \displaystyle{\sum_{J \in \bar P}} \eta_0'(J, I)  &\text{if } I=J,\\
            0   &\text{otherwise.}
        \end{cases}
    \]
    Observe that $\kappa_0$ is nonnegative and supported on a finite subset of $\Delta_P \times \Delta_P$.
    Now define~$\eta_0$ as ${\eta_0 = \eta_0' + \kappa_0}$.
    By construction, $\eta_0$ is finitely supported, nonnegative, and satisfies
    \[
        u_1(I) = \sum_{J \in \bar P} \eta_0(J,I).
    \]
    Because $\eta_0(I, J) = \eta_0'(I,J)$ whenever $I \neq J$, we have that
    \[
        \max_{(I,J) \in \supp(\eta_0)} d_{\bar P}(I,J) = \max_{(I,J) \in \supp(\eta_0')} d_{\bar P} (I, J) = ||\nu_0||.
    \]
    Next, let $u_0: \bar P \to \Z$ be defined as
    \[
        u_0(I) = \sum_{J \in \bar P} \eta_0(I, J).
    \]
    Then, because $\kappa_0$ is supported on a subset of $\Delta_P \times \Delta_P$, we have that for any $I \in \bar P^\circ$,
    \[
        u_0(I) = \sum_{J \in \bar P} \eta_0(I, J) =
        \sum_{J \in \bar P} \eta_0'(I, J) + \sum_{J \in \bar P} \kappa_0(I, J) =
        \sum_{J \in \bar P} \eta_0'(I, J) = 
        w_0(I).
    \]
    Finally, constructing $\eta_1$ and $u_2$ with the same method concludes the proof.
\end{proof}

\begin{prop}[Triangle inequality for the bottleneck distance]\label{prop: triangle for bottleneck}
    Let $(P,d_P)$ be a metric poset and let $w_0, w_1, w_2: \bar P \to \Z$ be persistence diagrams with matchings $w_0 \xleftarrow{f_0} \nu_0 \xrightarrow{f_1} w_1$ and $w_1 \xleftarrow{f_2} \nu_1 \xrightarrow{f_3} w_2$.
    Then, there is a matching $\gamma$ between $w_0$ and $w_2$ with cost $||\gamma|| \leq ||\nu_0|| + ||\nu_1||$.
\end{prop}
\begin{proof}
    By Lemma \ref{lem:pushforward-matchings}, there exist persistence diagrams $u_0, u_1, u_2: \bar P \to \Z$ and finitely supported, nonnegative functions $\eta_0, \eta_1: \bar P \times \bar P \to \Z$ satisfying \ref{enum:matching1}, \ref{enum:matching2}, and \ref{enum:matching3}.
    Note that \ref{enum:matching2} implies that $u_0,u_1$, and $u_2$ are nonnegative and that summing each of the functions $u_0, u_1, u_2, \eta_0$, and $\eta_1$ over their domains yields the same number.
    Let $N$ be this number.

    For every $i \in \{0, 1, \ldots, N\}$, we inductively construct functions $\eta^{(i)}, \eta_0^{(i)}, \eta_1^{(i)}: \bar P \times \bar P \to \Z$ and $u_0^{(i)}, u_1^{(i)}, u_2^{(i)}: \bar P \to \Z$ such that,
    \begin{enumerate}
        \item for any $I \in \bar P$,
        \begin{align*}
            u_0^{(i)}(I) &= \sum_{J \in \bar P} \eta_0^{(i)}(I, J) = u_0(I) - \sum_{J \in \bar P} \eta^{(i)}(I, J)\\
            u_1^{(i)}(I) &= \sum_{J \in \bar P} \eta_0^{(i)}(J, I) = \sum_{J \in \bar P} \eta_1^{(i)}(I, J)\\
            u_2^{(i)}(I) &= \sum_{J \in \bar P} \eta_1^{(i)}(J, I) = u_2(I) - \sum_{J \in \bar P} \eta^{(i)}(J, I),
        \end{align*} \label{enum:bottleneck1}
        \item $\supp (\eta_0^{(i)}) \subseteq \supp(\eta_0)$, $\supp (\eta_1^{(i)}) \subseteq \supp(\eta_1)$, and \label{enum:bottleneck2}
        \item $\max_{(I,J) \in \supp(\eta^{(i)})} d_{\bar P}(I, J) \leq ||\nu_0|| + ||\nu_1||$.\label{enum:bottleneck3}
    \end{enumerate}

    Start with the functions  defined by
    \begin{align*}
        &\eta^{(0)} := 0
        &u_0^{(0)} := u_0\\
        &\eta_0^{(0)} := \eta_0
        &u_1^{(0)} := u_1\\
        &\eta_1^{(0)} := \eta_1
        &u_2^{(0)} := u_2.
    \end{align*}
    These trivially satisfy \ref{enum:bottleneck1}, \ref{enum:bottleneck2}, and \ref{enum:bottleneck3}.
    Now choose $i\in \{ 0, 1,...,N-1\}$ and suppose that \ref{enum:bottleneck1}, \ref{enum:bottleneck2}, and~\ref{enum:bottleneck3} are satisfied.
    Pick some $I_1 \in \supp(u_1^{(i)})$ and then choose $I_0, I_2 \in \bar P$ with $\eta_0^{(i)}(I_0, I_1) > 0$ and $\eta_1^{(i)}(I_1, I_2) > 0$.
    Let
    \begin{align*}
        &\eta^{(i+1)} := \eta^{(i)} + \mathbf{1}_{(I_0, I_2)}
        &u_0^{(i+1)} := u_0^{(i)} - \mathbf{1}_{I_0}\\
        &\eta_0^{(i+1)} := \eta_0^{(i)} - \mathbf{1}_{(I_0, I_1)}
        &u_1^{(i+1)} := u_1^{(i)} - \mathbf{1}_{I_1}\\
        &\eta_1^{(i+1)} := \eta_1^{(i)} - \mathbf{1}_{(I_1, I_2)}
        &u_2^{(i+1)} := u_2^{(i)} - \mathbf{1}_{I_2}
    \end{align*}
    where $\mathbf{1}_{x}$ denotes the indicator function on $\{x\}$.
    To establish that these functions satisfy \ref{enum:bottleneck1}, we note that all of the equations follow from similar arguments so, for the sake of brevity, we only show the first two equalities here.
    For any $I \in \bar P$,
    \begin{align*}
        u_0^{(i+1)}(I) &= u_0^{(i)}(I) - \mathbf{1}_{I_0}(I)\\
        &= \sum_{J \in \bar P} \eta_0^{(i)}(I,J) - \mathbf{1}_{I_0}(I)\\
        &= \sum_{J \in \bar P} \eta_0^{(i+1)}(I,J) + \sum_{J \in \bar P} \mathbf{1}_{(I_0, I_1)}(I,J) - \mathbf{1}_{I_0}(I)\\
        &= \sum_{J \in \bar P} \eta_0^{(i+1)}(I,J) + \mathbf{1}_{I_0}(I) - \mathbf{1}_{I_0}(I)\\
        &= \sum_{J \in \bar P} \eta_0^{(i+1)}(I,J)
    \end{align*}
    and
    \begin{align*}
        u_0^{(i+1)}(I) &= u_0^{(i)}(I) - \mathbf{1}_{I_0}(I)\\
        &= u_0(I) - \sum_{J \in \bar P} \eta^{(i)}(I,J) - \mathbf{1}_{I_0}(I)\\
        &= u_0(I) - \Big( \sum_{J \in \bar P} \eta^{(i+1)}(I,J) - \sum_{J \in \bar P} \mathbf{1}_{(I_0, I_2)} (I,J) \Big) - \mathbf{1}_{I_0}(I)\\
        &= u_0(I) - \sum_{J \in \bar P} \eta^{(i+1)}(I,J) + \mathbf{1}_{I_0}(I) - \mathbf{1}_{I_0}(I)\\
        &= u_0(I) - \sum_{J \in \bar P} \eta^{(i+1)}(I,J).
    \end{align*}
    Property \ref{enum:bottleneck2} is immediate from construction.
    To establish \ref{enum:bottleneck3}, first note that
    \[
        \supp(\eta^{(i+1)}) = \supp(\eta^{(i)}) \cup \{(I_0, I_2)\}
    \]
    so it suffices to show that $d_{\bar P}(I_0, I_2) \leq ||\nu_0|| + ||\nu_1||$.
    Because $(I_0, I_1) \in \supp(\eta_0^{(i)}) \subseteq \supp(\eta_0)$, we have
    \[
        d_{\bar P}(I_0, I_1) \leq \max_{(I,J) \in \supp(\eta_0)} d_{\bar P} (I, J) = ||\nu_0||
    \]
    and likewise, $d_{\bar P} (I_1, I_2) \leq ||\nu_1||$.
    Therefore, by the triangle inequality for $d_{\bar P}$, we have that $d_{\bar P}(I_0 , I_2) \leq ||\nu_0|| + ||\nu_1||$.

    By construction, we have that $u_0^{(N)}(I) = u_1^{(N)}(I) = u_2^{(N)}(I) = 0$ for any $I \in \bar P$.
    Combining this with \ref{enum:bottleneck1} shows that, for any $I \in \bar P$,
    \begin{align*}
        u_0(I) &= \sum_{J \in \bar P} \eta^{(N)}(I,J)\\
        u_2(I) &= \sum_{J \in \bar P} \eta^{(N)}(J,I).
    \end{align*}
    By \ref{enum:bottleneck3}, we have that
    \[
        \max_{(I,J) \in \supp(\eta^{(N)})} d_{\bar P} (I,J) \leq ||\nu_0|| + ||\nu_1||.
    \]
    Therefore, by Remark \ref{rmk:matchings-equivalence}, $\eta^{(N)}$ induces a matching $u_0\leftarrow\gamma \rightarrow u_2$ with $||\gamma|| \leq ||\nu_0|| + ||\nu_1||$.
    Because $u_0$ and $u_2$ coincide with $w_0$ and $w_2$ respectively on $\bar P^\circ$, $\gamma$ is a matching between $w_0$ and $w_2$ as well (see Remark \ref{rmk: matching is insensitive to diagonal}).
\end{proof}

\begin{ex}\label{ex: composing-matchings}
    Let $w_0, w_1, w_2: \overline{[0,\infty)} \to \Z$ be the persistence diagrams defined as
    \begin{align*}
        w_0(0,3) &= w_0(1,2)=1\\
        w_1(1,3) &= 2\\
        w_2(1,4) &= w_2(2,3)=1
    \end{align*}
    and 0 elsewhere.
    Define the functions $\eta_0, \eta_1: \overline{[0,\infty)} \times \overline{[0,\infty)} \to \Z$ as
    \begin{align*}
        \eta_0 \big( (0,3),(1,3) \big) &= \eta_0 \big( (1,2),(1,3) \big)=1\\
        \eta_1 \big( (1,3), (1,4) \big) &= \eta_1 \big( (1,3), (2,3) \big)=1
    \end{align*}
    and taking value 0 outside of these pairs.
    By Remark \ref{rmk:matchings-equivalence}, $\eta_0$ induces a matching between~$w_0$ and $w_1$, and $\eta_1$ induces a matching between $w_1$ and $w_2$.
    The proof of Proposition \ref{prop: triangle for bottleneck} constructs a function $\eta^{(2)}: \overline{[0,\infty)} \times \overline{[0,\infty)} \to \Z$ which induces a matching between $w_0$ and~$w_2$.
    See Figure~\ref{fig: composing-matchings} for an illustration.
\end{ex}

%%%%%%%%%%%%%%%%%%%%%%%%%%%%%%%%%%%%%%%%%%%%%%%%%%%%%%%%%%%
\section{Bottleneck Stability}\label{sec: stability}
%%%%%%%%%%%%%%%%%%%%%%%%%%%%%%%%%%%%%%%%%%%%%%%%%%%%%%%%%%%

The bottleneck stability theorem, originally proved in \cite{CSEdH}, is arguably the most important result in persistent homology.
Here we present a new and simpler proof using Rota's Galois connection theorem.

\begin{thm}[Bottleneck Stability]\label{thm:bottleneck stability}
    Let $\Mfunc_0 , \Mfunc_1: [0,\infty) \to \Vec$ be constructible persistence modules with persistence diagrams $m_0 := \partial \mathsf{null}_{\Mfunc_0}$ and $m_1 := \partial \mathsf{null}_{\Mfunc_1}$ respectively. Then, $$d_B (m_0, m_1) \leq d_I(\Mfunc_0, \Mfunc_1).$$
\end{thm}

It may appear that functoriality (Proposition \ref{prop:functoriality}) immediately implies the result since a Galois interleaving $\Mfunc_0 \leftarrow \Gamma \rightarrow \Mfunc_1$ induces morphisms $\partial \mathsf{null}_{\Mfunc_0} \leftarrow \partial \mathsf{null}_{\Gamma} \rightarrow \partial \mathsf{null}_{\Mfunc_1}$.
Unfortunately though, we cannot guarantee that $\partial \mathsf{null}_{\Gamma}$ is nonnegative.
Instead, we proceed by breaking up the Galois interleaving $\Gamma$ into simpler Galois interleavings whose persistence diagrams we can guarantee are nonnegative.
We accomplish this by using a Galois interleaving to interpolate between $\Mfunc_0$ and $\Mfunc_1$ (Lemma \ref{lem:interpolation finite}).
We then use this interpolation to obtain a sequence of persistence modules interleaved by persistence modules over totally ordered sets.
Proposition \ref{prop: positivity} then guarantees that each of these interleavings induces a matching.

Before proving Theorem~\ref{thm:bottleneck stability}, we first show that any Galois interleaving between $\Mfunc_0$ and~$\Mfunc_1$ with cost $\varepsilon$ can be captured by a Galois interleaving between persistence modules over finite posets with cost $\varepsilon$.

\begin{lem}\label{lem: finite poset repn}
    Let $\Mfunc_0, \Mfunc_1: [0,\infty) \to \Vec$ be constructible persistence modules and let $\Gamma$ be a Galois interleaving between $\Mfunc_0$ and $\Mfunc_1$ with cost $\ee$.
    Then, there exist finite subsets $S_0,S_1 \subseteq [0,\infty)$ and persistence modules $\Tilde{\Mfunc}_0 : S_0 \to \Vec$ and $\tilde{\Mfunc}_1: S_1 \to \Vec$ together with a finite poset $R$ and a Galois interleaving $\Tilde{\Gamma} : R \to \Vec$ between $\Tilde{\Mfunc}_0$ and $\Tilde{\Mfunc}_1$ with cost at most $\varepsilon$.
    Moreover, for $i \in \{0,1\}$, if $m_i$ and $\tilde{m_i}$ are persistence diagrams of $\Mfunc_i$ and $\tilde{\Mfunc}_i$ respectively, then, for any $I \in \overline{[0,\infty)}^\circ$,
    \begin{align*}
        m_i(I) &=
        \begin{cases}
            \tilde{m_i}(I)  &\text{if } I \in \overline{S_i}\\
            0               &\text{otherwise}.
        \end{cases}
    \end{align*}

\end{lem}

\begin{proof}
   Let $T_0, T_1 \subseteq [0,\infty)$ be finite sets such that $\Mfunc_0$ is $T_0$-constructible and $\Mfunc_1$ is $T_1$-constructible. By Proposition~\ref{prop: old interleaving}, there is an $\varepsilon$-interleaving $\Gamma$ between $\Mfunc_0$ and $\Mfunc_1$ such that the following diagram commutes.

   \begin{center}
       \begin{tikzcd}
               &{[0,\infty)} \times_\ee \{0,1\}\ar[dd,"\Gamma"]\\
           {[0,\infty)}\ar[ur,"\iota_0",hook]\ar[dr,"\Mfunc_0"']  &   &{[0,\infty)}\ar[ul,"\iota_1"',hook]\ar[dl,"\Mfunc_1"]\\
               &\Vec
       \end{tikzcd}
   \end{center}

   The finiteness of $T_0$ and $T_1$, together with the $\varepsilon$-interleaving $\Gamma$, implies that there is $t \in [0,\infty)$ such that every $t' \geq t$, we have $\Mfunc_0 (t') \cong \Mfunc_1(t')$. We then define 
   \[
       S_i := \Big ( \big (\cup_{n=0}^{\infty} \big ( (T_i \cup (T_{1-i} + \varepsilon)) + n2\varepsilon \big ) \big ) \cap [0,t] \Big ) \cup \{t\}
   \]
   and $\Tilde{\Mfunc}_i := {\Mfunc_i}|_{S_i}$
   for $i=0,1$.
   Since each $\Mfunc_i$ is $T_i$ constructible and $T_i \subseteq S_i$, the persistence diagrams of $\Mfunc_i$ and $\Tilde{\Mfunc}_i$ have the same values on $\overline{S_i}^\circ$ and are 0 outside $\overline{S_i}$ by Proposition \ref{prop: constructible MI}.

   Observe that $ \iota_0 (S_0) \cup \iota_1 (S_1)$ has two maximal elements, namely $(t,0)$ and $(t,1)$. Gluing these two maximal elements via an equivalence relation $\sim$, let $R := \iota_0 (S_0) \cup \iota_1 (S_1) / \sim$ and  $\Tilde{\Gamma} = \Gamma|_{R}$. Observe that $\iota_i$ has a left adjoint $\pi_i$ given by $\pi_i(x,s) = \min (x+\ee |i-s|, t)$. Then, we have the following Galois interleaving 

   \begin{center}
       \begin{tikzcd}
               &R\ar[dl,"\pi_0"', bend right, dashed]\ar[dr, "\pi_1", bend left, dashed]\ar[dd,"\Tilde{\Gamma}"]\\
           {S_0}\ar[dr,"\Tilde{\Mfunc}_0"']\ar[ur, bend right,"\iota_0"]   &   &{S_1}\ar[dl,"\Tilde{\Mfunc}_1"]\ar[ul, bend left, "\iota_1"']\\
               &\Vec
       \end{tikzcd}
   \end{center}
    together with cost $||\Tilde{\Gamma}|| = d_\infty(\pi_0 , \pi_1) \leq \ee$, which concludes the proof.
\end{proof}

The following lemma, known as the interpolation lemma, appears in some form in many different proofs of the bottleneck stability theorem\cite{vineyards, CSEdH, skraba-turner-2020, bjerkevik-lesnick2021}.
We extend the interpolation lemma from the traditional setting of $\ee$-interleavings to the more general setting of Galois interleavings.

\begin{lem}[Interpolation]\label{lem:interpolation finite}
    Let $S_0, S_1 \subseteq [0,\infty)$ be finite posets and let $\Mfunc_0: S_0 \to \Vec$ and $\Mfunc_1: S_1 \to \Vec$ be two persistence modules with a Galois interleaving $\Gamma: R \to \Vec$, where $R$ is a finite poset.
    Suppose $||\Gamma|| = \ee$.
    Then there exists a family of persistence modules, $\{\Mfunc_t: S_t \to \Vec \}_{t\in [0,1]}$, such that $d_I^G (\Mfunc_t, \Mfunc_s) \leq \ee|t-s|$ for any $t,s\in [0,1]$.
\end{lem}
\begin{proof}
    Let
    \begin{center}
        \begin{tikzcd}
                &R\ar[dl,"f_0"', bend right, dashed]\ar[dr, "f_1", bend left, dashed]\ar[dd,"\Gamma"]\\
            {S_0}\ar[dr,"\Mfunc_0"']\ar[ur, bend right,"g_0"]   &   &{S_1}\ar[dl,"\Mfunc_1"]\ar[ul, bend left, "g_1"']\\
                &\Vec
        \end{tikzcd}
    \end{center}
    be the Galois interleaving between $\Mfunc_0$ and $\Mfunc_1$.
    Recall from Definition \ref{def:downsets} that $\down(R)$ is the poset of non-empty downsets of $R$.
    For each $t\in (0,1)$, define the order-preserving map $f_t : \down(R) \to [0,\infty)$ by 
    \[
         f_t(A) = \max_{a\in A} \{ (1-t) \, f_0(a) + t \, f_1(a) \}.
    \]
    Let $S_t := \image(f_t)$, and consider $f_t$ as a map into $S_t$. Then, as in Remark~\ref{rmk: unique left-right adjoint via formula}, each $f_t$ has a right adjoint $g_t : S_t \to \down(R)$ given by 
    \[
        g_t(x) =\{r \in R \mid (1-t) f_0(r) + t f_1(r) \leq x \}.
    \]
    We now extend $\Gamma: R \to \Vec$ to a persistence module $\tilde{\Gamma}: \down(R) \to \Vec$ by $\tilde{\Gamma} (A) := \colim \, \Gamma |_A$.
    Since $\Gamma(r) \cong \colim \Gamma |_{\downarrow r} $ for every $r\in R$, the following diagram commutes up to a natural isomorphism.
    \begin{center}
        \begin{tikzcd}
            R \ar[r, hook] \ar[rd, "\Gamma"']    & \down(R) \ar[d, "\tilde{\Gamma}"] \\
                & \Vec.
        \end{tikzcd}
    \end{center}
    For each $t\in (0,1)$ we define $\Mfunc_t := \tilde{\Gamma} \circ g_t$. That $d_I^G(\Mfunc_t, \Mfunc_s) \leq \varepsilon |t-s|$ follows from the commutative diagram
    \begin{center}
        \begin{tikzcd}
            & \down(R) \ar[ld, "f_t"', bend right, dashed] \ar[rd, "f_s", dashed, bend left] \ar[dd, "\tilde{\Gamma}"] \\
            {S_t} \arrow[ru, "g_t", bend right] \arrow[rd, "\Mfunc_t"']    &   & {S_s}\ar[lu, "g_s"', bend left] \ar[ld, "\Mfunc_s"]\\
            &\Vec
        \end{tikzcd}
    \end{center}
    together with the fact that $d_\infty (f_t, f_s) \leq \ee |t-s|$, which follows from:

    \begin{align*}
       d_\infty (f_t, f_s) &= \max_{A \in \down(R)} \big|f_t (A) - f_s(A)\big| \\
        &= \max_{A \in \down(R)} \big|\max_{a\in A}\{ (1-t)f_0(a)+t f_1(a) \} - \max_{a\in A}\{ (1-s)f_0(a)+sf_1(a) \}\big| \\
        &\leq \max_{A\in \down(R)} \max_{a\in A} \big| (1-t)f_0(a) + tf_1(a) - (1-s)f_0(a) - sf_1(a)   \big| \\ 
        &=\max_{r\in R} \big| (1-t)f_0(r) + tf_1(r) - (1-s)f_0(r) - sf_1(r)   \big| \\
        &= \max_{r\in R} \big| (s-t)f_0(r) - (s-t)f_1(r)   \big| \\
        &= |t-s| \cdot \max_{r\in R} \big| f_0(r) - f_1(r) \big| \\
        &= |t-s| \cdot ||\Gamma|| = \ee |t-s|.
    \end{align*}

\end{proof}

\begin{ex}\label{ex: interpolation}
    Let $\Mfunc_0 \leftarrow \Gamma \rightarrow \Mfunc_1$ be the Galois interleaving from Example \ref{ex: interleaving}.
    The interpolation induced by this Galois interleaving is shown in Figure \ref{fig: interpolation}.
    \begin{figure}
        \centering
        \includegraphics[scale=.8]{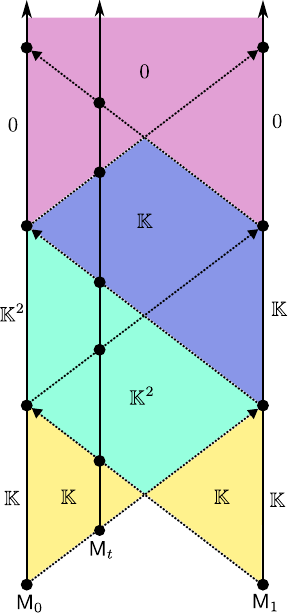}
        \caption{The interpolation of $\Mfunc_0$ and $\Mfunc_1$ induced by the Galois interleaving $\Gamma$ from Example \ref{ex: interleaving}. The domain $S_t$ of $\Mfunc_t$ is given by the intersection of the dashed lines with the vertical line at $t$. The critical points of the interpolation occur where the dashed lines intersect: at 0, .5, and 1. The maps $\alpha_{t,c}$ defined in Proposition \ref{prop:morphism} flow along the dashed lines.}
        \label{fig: interpolation}
    \end{figure}
\end{ex}

Within an interpolation $\{\Mfunc_t\}$, there are certain points where the fundamental structure of the persistence modules change.
We call these points critical.

\begin{defn}
    Let $\{ \Mfunc_t \}$ be an interpolation between persistence $\Mfunc_0$ and $\Mfunc_1$ with $R,f_0,f_1$ as defined in the proof of Lemma \ref{lem:interpolation finite}.
    A point $t \in [0,1]$ is \define{critical} if there exist distinct pairs $(f_0 (r), f_1 (r)) \neq (f_0 (r'),f_1(r'))$ for $r,r' \in R$ with
    \[(1-t) f_0(r) + t f_1(r) = (1-t) f_0 (r') + t f_1 (r').\]
\end{defn}

Because there are only finitely many distinct pairs $(f_0(r), f_1(r))$ for $r\in R$ and the map $t \mapsto (1-t)f_0(r) + t f_1(r)$ is linear for every $r\in R$, there are finitely many critical points in an interpolation.

\begin{prop}\label{prop:morphism}
    Let $\Mfunc_0: S_0 \to \Vec$ and $\Mfunc_1: S_1 \to \Vec$ be persistence modules over finite subsets $S_0, S_1 \subseteq [0, \infty)$, let $\Gamma: R \to \Vec$ be a Galois interleaving between them over a finite poset $R$, and let $\{\Mfunc_t\}$ be the interpolation induced by $\Gamma$.
    If $t\in [0,1]$ is a non-critical point and $c \in [0,1]$ is any (possibly critical) point with no critical points strictly between $t$ and $c$ then there is a morphism $\alpha_{t,c}: S_t \leftrightarrows S_c : \beta_{c,t}$ from $\Mfunc_t$ to $\Mfunc_c$.
\end{prop}
\begin{proof}
    We reuse the notation from the proof of Lemma \ref{lem:interpolation finite} here and begin by defining a poset map $\alpha_{t,c}: S_t \to S_c$.
    As $t$ is non-critical, each $x \in S_t$ can be written uniquely as $(1-t)f_0(r) + t f_1(r)$ for some $r\in R$.
    We can then define $\alpha_{t,c}$ as
    \[
        \alpha_{t,c}: x = (1-t)f_0(r) + t f_1(r) \mapsto (1-c) f_0(r) + c f_1(r).
    \]
    See Figure \ref{fig: interpolation} for an illustration.
    Since there are no critical points strictly between $t$ and~$c$, the map $\alpha_{t,c}$ is order-preserving and sends the minimum element of $S_t$ to the minimum element of $S_c$.
    Thus, by Lemma \ref{lem:finite-totally-ordered-adjoints}, $\alpha_{t,c}$ has a right adjoint $\beta_{c,t}: S_c \to S_t$.
    It follows that
    \begin{align*}
        g_t \big( \beta_{c,t}(x) \big)
        &= \{ r \in R \mid (1-t) f_0(r) + t f_1(r) \leq \beta_{c,t}(x) \}\\
        &= \{ r \in R \mid \alpha_{t,c} \big( (1-t) f_0(r) + t f_1(r) \big) \leq x \}\\
        &= \{ r \in R \mid (1-c) f_0(r) + c f_1(r) \leq x \}\\
        &= g_c(x).
    \end{align*}
    Therefore, $\tilde{\Gamma} \circ g_t \circ \beta_{c,t} \cong \tilde{\Gamma} \circ g_c$ and so $\Mfunc_t \circ \beta_{c,t} \cong \Mfunc_{c}$ which concludes the proof.
\end{proof}

We are now ready to prove the bottleneck stability theorem.

\begin{proof}[Proof of Theorem~\ref{thm:bottleneck stability}]
By Lemma~\ref{lem: finite poset repn}, we can assume that $\Mfunc_0 : S_0 \to \Vec$ and $\Mfunc_1 : S_1 \to \Vec$ are persistence modules over finite subsets $S_0, S_1 \subseteq [0,\infty)$ with a Galois interleaving $\Gamma : R \to \Vec$ between them with cost $\ee$ over some finite poset $R$.
By Lemma~\ref{lem:interpolation finite}, $\Gamma$ induces an interpolation $\{ M_t \}$ with critical points $C :=\{ 0=c_0 < c_1 < \cdots < c_k = 1 \} \subseteq [0,1]$.
Choose a point $t_i$ in the open interval $(c_i, c_{i+1})$ for each $i \in \{ 0,1,\ldots k-1 \}$.
By Proposition \ref{prop:morphism}, there are morphisms
\begin{center}
    \begin{tikzcd}
                &\Mfunc_{t_0}\ar[dl]\ar[dr]    &   &\Mfunc_{t_1}\ar[dl]\ar[dr]    &   &\ldots\ar[dl]\ar[dr]\\
        \Mfunc_0     &           &\Mfunc_{c_1}    &       &\Mfunc_{c_2}    &   &\Mfunc_1.
    \end{tikzcd}
\end{center}
Here the Galois interleaving $\Gamma$ breaks down into a sequence of Galois interleavings $\Mfunc_{t_i}$, each indexed by totally ordered sets.
By functoriality (Proposition \ref{prop:functoriality}), this induces a sequence of morphisms

\begin{center}
    \begin{tikzcd}[column sep = small]
                &{\partial \mathsf{null}_{\Mfunc_{t_0}}}\ar[dl]\ar[dr]    &   &{\partial \mathsf{null}_{\Mfunc_{t_1}}}\ar[dl]\ar[dr]    &   &\ldots\ar[dl]\ar[dr]\\
        {\partial \mathsf{null}_{\Mfunc_0}}     &           &{\partial \mathsf{null}_{\Mfunc_{c_1}}}    &       &{\partial \mathsf{null}_{\Mfunc_{c_2}}}    &   &{\partial \mathsf{null}_{\Mfunc_1}}.
    \end{tikzcd}
\end{center}
By Proposition \ref{prop: positivity}, each $\partial \mathsf{null}_{\Mfunc_{t_i}}$ is nonnegative on the off-diagonal intervals and, by Remark \ref{rmk: matching is insensitive to diagonal}, we can set all the diagonal values to zero while preserving the morphisms.
Moreover, we may extend the domains of the persistence diagrams $\partial \mathsf{null}_{\Mfunc_{c_i}}$ from $\overline{S_{c_i}}\subseteq \overline{[0,\infty)}$ to $\overline{[0,\infty)}$ by assigning zero values to any interval $I \in \overline{[0,\infty)}\setminus \overline{S_{c_i}}$.
Therefore, each span in the above diagram is a matching with cost
\begin{align*}
    ||\partial \mathsf{null}_{\Mfunc_{t_i}}|| &\leq d_\infty (\alpha_{t_i , c_i}, \alpha_{t_i , c_{i+1}})\\ 
    &= \max_{s \in S_{t_i}} \big| \alpha_{t_i , c_i}(s) - \alpha_{t_i , c_{i+1}}(s)  \big|\\
    &= \max_{r \in R} \big| (1-c_i) f_0 (r) +c_i f_1(r) - \big( (1-c_{i+1}) f_0(r) + c_{i+1} f_1(r) \big) \big|\\
        &= \max_{r \in R} \big| (c_{i+1} -c_i) f_0(r) - (c_{i+1} -c_{i} ) f_1(r) \big|\\
        &= (c_{i+1} - c_i) \max_{r \in R} |f_0(r)-f_1(r)|\\
        &\leq  (c_{i+1} - c_i) \ee.
\end{align*}
Now by the triangle inequality (Proposition~\ref{prop: triangle for bottleneck}), there is a matching between $\partial \mathsf{null}_{\Mfunc_0}$ and $\partial \mathsf{null}_{\Mfunc_1}$ with cost at most 
\[
    \sum_{i=0}^{k-1} (c_{i+1}-c_i)\ee = \ee.
\]
This concludes the proof of Theorem \ref{thm:bottleneck stability}.
\end{proof}

%%%%%%%%%%%%%%%%%%%%%%%%%%%%%%%%%%%%%%%%%%%%%%%%%%%%%%%%%%%
\section{Multiparameter Persistence Diagrams}\label{sec: multiparameter}
%%%%%%%%%%%%%%%%%%%%%%%%%%%%%%%%%%%%%%%%%%%%%%%%%%%%%%%%%%%

In this section, we extend the notion of birth-death functions, originally defined for filtrations~\cite{editdistance}, to persistence modules. We then demonstrate that the M\"obius inversion of birth-death functions coincide with the persistence diagram of the module, i.e. the M\"obius inversion of nullity functions; see Proposition~\ref{prop:ker and bd}. Lastly, we show how another application of Rota's Galois connection theorem facilitates the construction of fibered barcodes from persistence diagrams induced by nullity/birth-death functions.

%%%%%%%%%%%%%%%%%%%%%%%%%%%%%%%%%%%%%%%%%%%%%%%%%%%%%%%%%%%
\subsection{Birth-Death Functions}
%%%%%%%%%%%%%%%%%%%%%%%%%%%%%%%%%%%%%%%%%%%%%%%%%%%%%%%%%%%

In this section, we define birth-death functions of persistence modules using free covers.
We show that birth-death functions, like nullity functions, induce persistence diagrams.

%%%%%%%%%%%%%%%%%%%%%%%%%%%%%%%%%%%%%%%%%%%%%%%%%%%%%%%%%%%
\subsubsection{Free Persistence Modules}\label{sec: free-modules}
%%%%%%%%%%%%%%%%%%%%%%%%%%%%%%%%%%%%%%%%%%%%%%%%%%%%%%%%%%%

Free persistence modules are necessary to extend the birth-death function from filtrations~\cite{editdistance} to persistence modules.
Here we define free persistence modules and show that every constructible persistence module admits a (finite-dimensional) free cover.

Recall that for any poset $P$ and for any $a\in P$, the persistence module $\kk^{\uparrow a} : P \to \Vec$ is defined by
    \[
        \kk^{\uparrow a} (b):= 
        \begin{cases} 
            \kk &\text{ if }  a \leq b\\
            0   &\text{otherwise}
        \end{cases}
    \]
and $\kk^{\uparrow a}(b\leq c) = \id_\kk$ for all $a\leq b \leq c$.

\begin{defn}
    A persistence module $\Ffunc: P \to \Vec$ is \define{free} if it is isomorphic to a finite direct sum of persistence modules of the form $\kk^{\uparrow a}$ for some $a \in P$.
    A \define{free cover} of a persistence module $\Mfunc: P \to \Vec$ consists of a free persistence module $\Ffunc : P \to \Vec$ and a surjective natural transformation $\varphi: \Ffunc \Rightarrow \Mfunc$.
\end{defn}

Since we are working in the category $\Vec$ of finite dimensional vector spaces, the free covers here are always finite direct sums of persistence modules of the form $\kk^{\uparrow a}$.
This differs from the setting of \cite{miller} where infinite direct sums are allowed.

\begin{ex}\label{ex:free presentation}
    Figure \ref{fig:free} shows a free cover of the persistence module $\Mfunc: P \to \Vec$ appearing in Example~\ref{ex:module}.
    The natural transformation $\varphi: \Ffunc \Rightarrow \Mfunc$ is given by 
    \begin{align*}
        \varphi_a &: (x, y, z) \mapsto (x, y)\\
        \varphi_b &: w \mapsto w\\
        \varphi_c &: (x,y,z,w) \mapsto x + y + w\\
        \varphi_d &: (x,y,z,w) \mapsto 0.
    \end{align*}
\end{ex}

The following two lemmas show that free covers behave well with morphisms of persistence modules.

\begin{lem}\label{lem:free pullback 0}
    Let $f: P \leftrightarrows Q:g$ be a Galois connection between any posets $P$ and $Q$. Then $\kk^{\uparrow a} \circ g \cong \kk^{\uparrow f(a)}$ for any $a \in P$.
\end{lem}
\begin{proof}
    For any $q\in Q$, 
    \begin{align*}
        \big(\kk^{\uparrow a} \circ g \big) (q) &\cong
        \begin{cases}
            \kk   &\text{if } a \leq g(q)\\
            0   &\text{otherwise}
        \end{cases}\\
        &\cong
        \begin{cases}
            \kk   &\text{if } f(a) \leq q\\
            0   &\text{otherwise}
        \end{cases}\\
        &\cong \kk^{\uparrow f(a)}(q).
    \end{align*}
\end{proof}

\begin{lem}\label{lem:free pullback}
    Let $\Mfunc: P \to \Vec$ and $\Nfunc: Q \to \Vec$ be persistence modules and let ${f:P \leftrightarrows Q:g}$ be a Galois connection with $\Mfunc \circ g \cong \Nfunc$.
    If $\varphi: \Ffunc \Rightarrow \Mfunc$ is a free cover of $\Mfunc$, then $\varphi \circ g : \Ffunc \circ g \Rightarrow \Nfunc$ is a free cover of $\Nfunc$.
\end{lem}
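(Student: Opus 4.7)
The plan is to unpack the definition of freeness, verify that precomposition with $g$ preserves it by using the Galois adjunction, and then whisker the surjection $\varphi$ with $g$.

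First I would write $\Ffunc = \bigoplus_{i=1}^{n} \kk^{\uparrow a_i}$ for some finite collection of elements $a_i \in P$, using the definition of a free persistence module. Since precomposition with the functor $g: Q \to P$ commutes with finite direct sums, we have $\Ffunc \circ g \cong \bigoplus_{i=1}^{n} \kk^{\uparrow a_i} \circ g$, so it suffices to identify each summand $\kk^{\uparrow a_i} \circ g$ as a free generator over $Q$.

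The key computation is this: for any $q \in Q$,
\[
    \big(\kk^{\uparrow a} \circ g\big)(q) = \kk^{\uparrow a}\big(g(q)\big) =
    \begin{cases}
        \kk & \text{if } a \leq g(q) \\
        0 & \text{otherwise,}
    \end{cases}
\]
and by the Galois adjunction $a \leq g(q)$ if and only if $f(a) \leq q$. The structure maps are identities wherever both sides are nonzero, so $\kk^{\uparrow a} \circ g \cong \kk^{\uparrow f(a)}$ as functors $Q \to \Vec$. Hence $\Ffunc \circ g \cong \bigoplus_{i=1}^{n} \kk^{\uparrow f(a_i)}$, which is free.

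It remains to produce a surjective natural transformation $\Ffunc \circ g \Rightarrow \Nfunc$. The natural transformation $\varphi : \Ffunc \Rightarrow \Mfunc$ whiskers with $g$ to give $\varphi g : \Ffunc \circ g \Rightarrow \Mfunc \circ g$, whose component at $q \in Q$ is the surjection $\varphi_{g(q)} : \Ffunc(g(q)) \epi \Mfunc(g(q))$. Composing with the isomorphism $\Mfunc \circ g \cong \Nfunc$ coming from the fact that $(f,g)$ is a morphism from $\Mfunc$ to $\Nfunc$ yields a natural transformation $\Ffunc \circ g \Rightarrow \Nfunc$ that is pointwise surjective, completing the construction of the free presentation. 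No step is really an obstacle; the only point requiring care is book-keeping the Galois adjunction to reindex the generators $a_i \in P$ as $f(a_i) \in Q$.
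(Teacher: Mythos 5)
Your proof is correct and follows essentially the same route as the paper's: reduce to the summands $\kk^{\uparrow a}$, use the Galois adjunction $a \leq g(q) \iff f(a) \leq q$ to identify $\kk^{\uparrow a} \circ g \cong \kk^{\uparrow f(a)}$, and whisker $\varphi$ with $g$ to get the surjection. You are in fact slightly more careful than the paper in explicitly composing with the isomorphism $\Mfunc \circ g \cong \Nfunc$ at the end.
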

\begin{proof}
    Since precomposition is additive, Lemma \ref{lem:free pullback 0} implies that $\Ffunc \circ g$ is a free persistence module.
    That $\varphi \circ g$ is surjective follows immediately from the surjectivity of $\varphi$.
\end{proof}

\begin{prop}\label{prop:free presentation}
    For any $S$-constructible persistence module $\Mfunc: P \to \Vec$, there is a free cover $\varphi: \Ffunc \Rightarrow \Mfunc$ where $\Ffunc$ is an $S$-constructible persistence module.
\end{prop}
\begin{proof}
    We proceed by constructing a free cover $\psi:\Gfunc \Rightarrow \Mfunc \big|_S$ and extending it to a free cover $\varphi: \Ffunc \Rightarrow \Mfunc$.
    For any $a \in S$, let $n_a = \dim \Mfunc \big|_S(a)$ and define $\Gfunc_a : S \to \Vec$ as the free persistence module
    \[
        \Gfunc_a = \bigoplus_{i=1}^{n_a} \kk^{\uparrow a}.
    \]
    Let $\psi_a(a): \Gfunc_a(a) \to \Mfunc \big|_S(a)$ be an isomorphism.
    Then $\psi_a(a)$ extends to a natural transformation $\psi_a : \Gfunc_a \to \Mfunc \big|_S$ by
    \[
        \psi_a (b)=
        \begin{cases}
            \Mfunc \big|_S(a \leq b) \circ \psi_a (a) &\text{if } a\leq b\\
            0   &\text{otherwise.}
        \end{cases}
    \]
    We can construct $\Gfunc$ by $\Gfunc = \bigoplus_{a \in S} \Gfunc_a$ and extend $\{ \psi_a \}_{a\in S}$ to a natural transformation $\psi : \Gfunc \to \Mfunc \big|_S$ with the universal property of a direct sum.
    Each $\psi_a$ is surjective at $a$ so $\psi$ is surjective everywhere and therefore, $\psi$ is a free cover of $\Mfunc \big|_S$.

    Since $\Mfunc$ is $S$-constructible, there exists a co-closure operator $c: P \to P$ with image $S$ satisfying $\Mfunc = \Mfunc\circ c$.
    By Proposition \ref{prop: coclosure}, the inclusion $\iota: S \hookrightarrow P$ has a right adjoint $\pi$ satisfying $c = \iota \circ \pi$.
    Define $\Ffunc: P \to \Vec$ as $\Ffunc = \Gfunc \circ \pi$ and $\varphi: \Ffunc \Rightarrow \Mfunc$ as $\varphi = \psi \circ \pi$.
    Now Lemma \ref{lem:free pullback} implies that $\varphi: \Ffunc \Rightarrow \Mfunc$ is a free cover of $\Mfunc$.
\end{proof}

There are often far smaller free covers than the one constructed in the proof of Proposition \ref{prop:free presentation}.
For more details on computing minimal free covers, see \cite{La_Scala_1998,Cox2005-zq,lesnick-free-presentations}.

\begin{ex}
    Constructibility as we've defined it is crucial to guarantee the existence of free covers.
    Consider the non-constructible functor $\Mfunc : \R \to \Vec$ defined by $\Mfunc (a) \cong \kk$ for all $a \in \R$ and $\Mfunc(a \leq b) = \id$ for any $a \leq b$.
    This functor does not have a finite-dimensional free cover.
    Another example is the non-constructible functor $\Nfunc : [0, \infty] \to \Vec$ defined as $\Nfunc(a) \cong \kk$ if $a >0$ and $\Nfunc(0) \cong 0$ with $\Nfunc(a\leq b) = \id_\kk$ for all $a>0$.
    There is no free cover of $\Nfunc$.
\end{ex}

\begin{figure}
    \[
    \begin{tikzcd}
        &\kk^4 & & & 0\\
        &\kk^4 \ar[u, "\id"] &\vphantom{1}\ar[r,Rightarrow, "\varphi"]   &\vphantom{1} & \kk \ar[u]\\
        \kk^3 \ar[ur, "\alpha"] & &\kk \ar[ul, "\beta"'] & \kk^2 \ar[ur, "\theta"] & & \kk \ar[ul, "\id"']
    \end{tikzcd}
    \]
    \caption{
        A free cover of the persistence module $\Mfunc$ from Example \ref{ex:module}.
        The map $\alpha$ is the inclusion into the first three coordinates while $\beta$ is the inclusion into the fourth coordinate.
        See Example \ref{ex:free presentation} for the natural transformation $\varphi: \Ffunc \Rightarrow \Mfunc$.
        }
    \label{fig:free}
\end{figure}

%%%%%%%%%%%%%%%%%%%%%%%%%%%%%%%%%%%%%%%%%
\subsubsection{Birth-Death Functions}
%%%%%%%%%%%%%%%%%%%%%%%%%%%%%%%%%%%%%%%%%

We now define birth-death functions induced by a free cover.

\begin{defn}
    Let $\Mfunc : P \to \Vec$ be a persistence module with a free cover ${\varphi: \Ffunc \Rightarrow \Mfunc}$.
    The \define{birth-death function of $\Mfunc$ induced by $\varphi$} is the map $m_\varphi : \bar P \to \Z$ defined by
    \[
        m_\varphi : (a,b) \mapsto \dim \big( \Ffunc(a) \cap \ker (\varphi_b) \big)
    \]
    where the intersection is given by the pullback
    \begin{center}
        \begin{tikzcd}
            \Ffunc(a)\ar[r, hookrightarrow]	    &\Ffunc(b)\\
            \Ffunc(a)\cap \ker (\varphi_b)\ar[u, dashed]\ar[r, dashed]    &\ker (\varphi_b)\ar[u, hookrightarrow].
        \end{tikzcd}
    \end{center}
\end{defn}

The following lemma relates the nullity function with birth-death functions.

\begin{lem}\label{lem:ker and bd}
    Let $\Mfunc: P \to \Vec$ be a persistence module with a free cover $\varphi: \Ffunc \Rightarrow \Mfunc$.
    Then
    \[
        \mathsf{null}_\Mfunc (a,b) = m_\varphi (a,b) - m_\varphi(a,a)
    \]
    for any $(a,b) \in \bar P$.
\end{lem}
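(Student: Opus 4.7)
The plan is to prove the identity by a diagram chase combined with rank--nullity. The key object is the commutative square
\[
\begin{tikzcd}
\Ffunc(a)\ar[r, "\Ffunc(a\leq b)"]\ar[d, "\varphi_a"', twoheadrightarrow]    &\Ffunc(b)\ar[d, "\varphi_b", twoheadrightarrow]\\
\Mfunc(a)\ar[r, "\Mfunc(a\leq b)"']  &\Mfunc(b)
\end{tikzcd}
\]
in which the vertical maps are surjective (since $\varphi$ is a free presentation) and, crucially, the top horizontal map is injective because $\Ffunc$ is a direct sum of modules of the form $\kk^{\uparrow c}$ whose structure maps are either identities or zero, so inclusions of summands give injections. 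Identifying $\Ffunc(a)$ with its image in $\Ffunc(b)$, the pullback $\Ffunc(a)\cap\ker(\varphi_b)$ is literally the set of $x\in\Ffunc(a)$ with $\varphi_b(x)=0$.

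The core step is to show that $\varphi_a$ restricts to a surjection
\[
\varphi_a\colon \Ffunc(a)\cap\ker(\varphi_b)\twoheadrightarrow \ker \Mfunc(a\leq b),
\]
whose kernel is exactly $\ker(\varphi_a)$. For the image, note that if $x\in\Ffunc(a)\cap\ker(\varphi_b)$, then commutativity of the square gives $\Mfunc(a\leq b)\varphi_a(x)=\varphi_b\,\Ffunc(a\leq b)(x)=0$, so $\varphi_a(x)\in\ker\Mfunc(a\leq b)$; conversely, any $y\in\ker\Mfunc(a\leq b)$ has a preimage $x\in\Ffunc(a)$ under $\varphi_a$, and that preimage automatically lies in $\Ffunc(a)\cap\ker(\varphi_b)$ by the same commutativity. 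For the kernel, one checks directly that $\ker(\varphi_a)\subseteq\Ffunc(a)\cap\ker(\varphi_b)$ (if $\varphi_a(x)=0$ then $\varphi_b(x)=\varphi_b\Ffunc(a\leq b)(x)=\Mfunc(a\leq b)\varphi_a(x)=0$), and the kernel of the restricted $\varphi_a$ is its ordinary kernel intersected with the domain, which therefore equals $\ker(\varphi_a)$.

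Applying rank--nullity to this surjection yields
\[
\dim\bigl(\Ffunc(a)\cap\ker(\varphi_b)\bigr)=\dim\ker(\varphi_a)+\dim\ker\Mfunc(a\leq b).
\]
Finally, I would observe that $m_\varphi(a,a)=\dim\bigl(\Ffunc(a)\cap\ker(\varphi_a)\bigr)=\dim\ker(\varphi_a)$ since $\ker(\varphi_a)\subseteq\Ffunc(a)$, and rearrange to obtain exactly $\ker_\Mfunc(a,b)=m_\varphi(a,b)-m_\varphi(a,a)$.

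The main obstacle, and essentially the only subtle point, is the identification of $\Ffunc(a)$ with a genuine subspace of $\Ffunc(b)$ so that the pullback in the definition of $m_\varphi$ really is a vector-space intersection; this in turn relies on $\Ffunc$ being a direct sum of the $\kk^{\uparrow c}$'s, whose structure maps are monomorphisms. Once this identification is in place the rest of the argument is routine.
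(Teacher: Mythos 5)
Your proof is correct and is essentially the paper's argument: the paper simply asserts via "a diagram chase" that $\ker\Mfunc(a\leq b)\cong\bigl(\Ffunc(a)\cap\ker(\varphi_b)\bigr)/\ker(\varphi_a)$ and then takes dimensions, which is exactly the surjection-with-kernel computation you spell out. Your explicit justification of the injectivity of $\Ffunc(a\leq b)$ (so that the pullback is a genuine intersection) is a point the paper leaves implicit in its hooked arrow, but the route is the same.
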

\begin{proof}
    The commutative diagram
    \begin{center}
        \begin{tikzcd}
            \Ffunc(a)\ar[r,hook,"\Ffunc(a\leq b)"]\ar[d,twoheadrightarrow,"\varphi_a"'] &\Ffunc(b)\ar[d,twoheadrightarrow,"\varphi_b"]\\
            \Mfunc(a)\ar[r, "\Mfunc(a \leq b)"] &\Mfunc(b)
        \end{tikzcd}
    \end{center}
    implies that the restriction of $\varphi_a$ onto $\Ffunc(a)\cap \ker(\varphi_b)$, denoted $\phi_{a,b} : \Ffunc(a)\cap \ker(\varphi_b) \to \Mfunc(a)$, has
    \begin{itemize}
        \item $\image (\phi_{a,b}) = \ker \big(\Mfunc(a\leq b)\big)$,
        \item $\ker (\phi_{a,b}) = \ker(\varphi_a)$
    \end{itemize}
    Hence, 
    \[
        \ker \big(\Mfunc(a \leq b)\big) \cong \frac{\Ffunc(a) \cap \ker (\varphi_b)}{\ker (\varphi_a)}.
    \]
    So, 
    \[
        \mathsf{null}_\Mfunc(a,b) = \dim \big( \Ffunc(a) \cap \ker(\varphi_b) \big) - \dim \big( \Ffunc(a) \cap \ker(\varphi_a) \big) = m_\varphi (a,b) - m_\varphi (a,a).
    \]
\end{proof}

\begin{prop}\label{prop: bd-is-const}
    Let $\Mfunc: P \to \Vec$ be a persistence module with a constructible set $S \subseteq P$.
    There exists a free cover $\varphi: \Ffunc \Rightarrow \Mfunc$ such that $m_\varphi$ is a $\bar S$-constructible function.
\end{prop}
\begin{proof}
    Let $c: P \to P$ be the co-closure operator with image $S$.
    By Proposition \ref{prop:free presentation}, we can choose a free cover $\varphi: \Ffunc \Rightarrow \Mfunc$ where $\Ffunc: P \to \Vec$ is an $S$-constructible persistence module.
    Now since $\Ffunc \cong \Ffunc \circ c$, we have
    \begin{align*}
        m_\varphi (a,b) &= \dim \big( \Ffunc(a) \cap \ker(\varphi_b) \big)\\
        &= \dim \Big( \Ffunc \big( c(a) \big) \cap \ker( \varphi_{c(b)} ) \Big)\\
        &=m_\varphi \big( c(a),c(b) \big)
    \end{align*}
    for any $(a,b) \in \bar P$.
    Therefore $m_\varphi = m_\varphi \circ \bar c$ and so $m_\varphi$ is $\bar S$ constructible.
\end{proof}

Next we show that birth-death functions also induce persistence diagrams.

\begin{prop}\label{prop:ker and bd}
    Let $\Mfunc: P \to \Vec$ be a constructible persistence module with a free cover $\varphi: \Ffunc \Rightarrow \Mfunc$.
    Then $\partial m_\varphi$ is a persistence diagram of $\Mfunc$. That is, $\partial m_\varphi$ and $\partial \mathsf{null}_\Mfunc$ agree on off-diagonal intervals.
\end{prop}
\begin{proof}
    For any $a \leq b \in P$, Lemma \ref{lem:ker and bd} shows $m_\varphi (a,b) - m_\varphi (a,a) = \mathsf{null}_\Mfunc (a,b)$.
    Let $n: \bar P \to \Z$ be the function defined by $n(a,b) = m_\varphi(a,a)$.
    Then $m_\varphi - n = \mathsf{null}_\Mfunc$ and so, by the linearity of M\"obius inversion, the claim reduces to showing that $\partial n$ is 0 off the diagonal.
    
    Let $\ell: P \to \Z$ be the function $\ell: a \mapsto m_\varphi(a,a)$.
    Define $f : P \to \bar P$ by $f(a) := (a,a)$ and $g: \bar P \to P$ by $g(b,c) := b$.
    The maps $(f,g)$ are a Galois connection as $(a,a)\leq (b,c)$ if and only if $a \leq b$.
    Since $\ell \circ g = n$, by Theorem \ref{thm:RGCT}
    \[
        \partial n (a,b) = f_\sharp \partial \ell (a,b) = \sum_{c \in f^{-1}(a,b)} \partial \ell(c).
    \]
    Now because $f^{-1}(a,b)$ is empty if $a\neq b$, it follows that $\partial n (a,b)=0$ if $a\neq b$ and so $\partial n$ is 0 outside the diagonal as desired.
\end{proof}

The following corollary is an immediate consequence of Proposition \ref{prop:ker and bd}.

\begin{cor}\label{cor:bd}
    If $\Mfunc : P \to \Vec$ is a constructible persistence module with free covers $\varphi: \Ffunc \Rightarrow \Mfunc$ and $\psi: \Gfunc \Rightarrow \Mfunc$ then $\partial m_\varphi \cong \partial m_\psi$.
\end{cor}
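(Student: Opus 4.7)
The plan is to derive this corollary directly from Proposition \ref{prop:ker and bd} by transitivity. The key observation is that the kernel function $\ker_\Mfunc : \bar P \to \Z$ is defined purely in terms of $\Mfunc$, with no reference to any free presentation. In particular, $\partial \ker_\Mfunc$ is a single well-defined function on $\bar P$, not one that depends on a choice of $\varphi$ or $\psi$.

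First, I would apply Proposition \ref{prop:ker and bd} to the free presentation $\varphi : \Ffunc \Rightarrow \Mfunc$ to conclude that $[\partial m_\varphi] = [\partial \ker_\Mfunc]$. Then I would apply the same proposition to the free presentation $\psi : \Gfunc \Rightarrow \Mfunc$ to conclude that $[\partial m_\psi] = [\partial \ker_\Mfunc]$. Combining these two equalities in the quotient by the diagonal-supported functions yields $[\partial m_\varphi] = [\partial m_\psi]$, which is the desired conclusion.

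There is essentially no obstacle, since all the substantive work has already been absorbed into Proposition \ref{prop:ker and bd}: that proposition shows the difference $m_\varphi - \ker_\Mfunc$ is a function pulled back from the diagonal, whose M\"obius inversion vanishes off $\Delta_P$ via Theorem \ref{thm:RGCT} applied to the Galois connection $f : a \mapsto (a,a)$, $g : (b,c) \mapsto b$. The only thing to note is that the equivalence relation $[\cdot]$ on persistence diagrams ignores values on the diagonal, so the chain of equivalences passes through the common representative $[\partial \ker_\Mfunc]$ without issue.
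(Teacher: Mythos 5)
Your proof is correct and matches the paper's intent exactly: the paper states that the corollary is ``an immediate consequence of Proposition \ref{prop:ker and bd},'' which is precisely the transitivity argument through $[\partial \ker_\Mfunc]$ that you spell out. Nothing further is needed.
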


\begin{rmk}[Equivalence with Classical Persistence Diagrams]\label{rmk: establish equivalence}
    Let $K_1 \subseteq K_2 \subseteq \cdots \subseteq K_n$ be a filtration of simplicial complexes.
    Let $\Mfunc$ be the persistence module obtained by applying the degree-$d$ homology functor for a fixed $d\in \mathbb{N}$; that is, $\Mfunc(i) = H_d(K_i)$ for each $i \in \{1, 2,\ldots, n\}$.
    We now show that the persistence diagram of $\Mfunc$, as per our Definition~\ref{def: dgm-of-a-mod}, coincides with the classical degree-$d$ persistence diagram of the filtration.

    The key insight is to use the framework of \cite{editdistance} as a bridge to the classical definition.
    Our strategy is to first show that our persistence diagram, computed via a canonical free cover of $\Mfunc$, aligns with the one defined in \cite{editdistance}.
    We then utilize the fact, established in \cite[Section 9.1]{editdistance}, that their construction recovers the classical persistence diagram.

    Let $\Ffunc$ be the persistence module obtained by applying the $d$-dimensional cycle space functor $Z_d$ to the filtration.
    That is, $\Ffunc(i) = Z_d(K_i)$ for any $i \in \{1,2,\ldots, n\}$.
    Since the indexing poset is finite and totally ordered, $\Ffunc$ is a free module.
    The quotient maps $Z_d(K_i) \to H_d(K_i)$ that define homology assemble into a surjective natural transformation of modules $\varphi: \Ffunc \Rightarrow \Mfunc$, providing a canonical free cover of $\Mfunc$.
    \textit{We caution that this construction of $\Ffunc$ from cycle spaces is only guaranteed to yield a free persistence module because the underlying poset is finite and totally ordered.}

    With this free cover, the birth-death function $m_\varphi$ is the same as the birth-death function from \cite{editdistance}.
    In Section 9.1 of \cite{editdistance}, it was shown that the M\"obius inversion of this birth-death function induces the classical notion of persistence diagrams for finite, totally ordered filtrations.
    Therefore, our definition applied to the module $\Mfunc$ indeed recovers the classical persistence diagram of the filtration.
\end{rmk}

%%%%%%%%%%%%%%%%%%%%%%%%%%%%%%%%%%%%%%%%%%%%%%%%%%%%%%%%%%%
\subsection{Rank Functions}
%%%%%%%%%%%%%%%%%%%%%%%%%%%%%%%%%%%%%%%%%%%%%%%%%%%%%%%%%%%

Persistence diagrams have been historically defined using the rank function rather than birth-death functions or nullity functions \cite{CSEdH,Patel2018}.
The disadvantage of the rank function is that its natural domain is the poset $\hat P$ rather than $\bar P$; see Definition \ref{def: posets-of-ints}.
While a Galois connection between posets $P$ and $Q$ induces a Galois connection between $\bar P$ and $\bar Q$, it fails to induce a Galois connection between $\hat P$ and $\hat Q$.
In fact, order preserving maps from $P$ to $Q$ in general do not induce order preserving maps from $\hat P$ to $\hat Q$.
For this reason, nullity functions and birth-death functions are much more natural to work with in our language.

\begin{defn}
    The \define{rank function} of a persistence module $\Mfunc: P \to \Vec$ is the map $\rank_\Mfunc : \hat P \to \Z$ defined by
    \[
        \rank_\Mfunc : (a,b) \mapsto \dim \Big( \image \big( \Mfunc(a \leq b) \big) \Big).
    \]
\end{defn}

\begin{rmk}\label{rmk:rank/null}
    By introducing a distinguished maximum element $\infty $ to $ P$, the domain of a persistence module $\Mfunc : P \to \Vec $  can be canonically extended to $P\cup\{{\infty}\} $ by defining $\Mfunc (\infty) = 0$.
    In this setting, the nullity function and the rank function carry the same information.
    This follows from the rank-nullity theorem:
    \[
        \dim \Mfunc(a) = \rank_\Mfunc (a,b) + \mathsf{null}_\Mfunc (a,b)
    \]
    for any $a \leq b$.
    Using the fact that $\dim \Mfunc(a) = \rank_\Mfunc (a,a) = \mathsf{null}_\Mfunc (a,\infty)$ we get
    \[
        \rank_\Mfunc (a,b) = \mathsf{null}_\Mfunc (a,\infty) - \mathsf{null}_\Mfunc (a,b) \hspace{1cm} \text{and} \hspace{1cm} \mathsf{null}_\Mfunc (a,b) = \rank_\Mfunc (a,a) - \rank_\Mfunc (a,b).
    \]
\end{rmk}

\begin{table}
    \centering
    \begin{tabular}{|c || c| c| c|| c| c| c|}
        \hline
        Interval & $\mathsf{null}_\Mfunc$ & $m_\varphi$ & $\rank_\Mfunc$ & $\partial \mathsf{null}_\Mfunc$ & $\partial m_\varphi$ & $\partial \rank_\Mfunc$ \\ [0.5ex] 
        \hline\hline
        $(a,a)$ & 0 & 1 & 2 & 0 & 1 & 1\\ 
        \hline
        $(b,b)$ & 0 & 0 & 1 & 0 & 0 & 0\\
        \hline
        $(a,c)$ & 1 & 2 & 1 & 1 & 1 & 1\\
        \hline
        $(b,c)$ & 0 & 0 & 1 & 0 & 0 & 1\\
        \hline
        $(a,d)$ & 2 & 3 & 0 & 1 & 1 & 0\\
        \hline
        $(b,d)$ & 1 & 1 & 0 & 1 & 1 & 0\\
        \hline
        $(c,c)$ & 0 & 3 & 1 & -1 & 1 & -1\\
        \hline
        $(c,d)$ & 1 & 4 & 0 & -1 & -1 & 0\\
        \hline
        $(d,d)$ & 0 & 4 & 0 & -1 & 0 & 0\\
        \hline
    \end{tabular}
    \caption{
        For the persistence module $\Mfunc$ defined in Example \ref{ex:module} with free cover $\varphi: \Ffunc \Rightarrow \Mfunc$ defined in Example \ref{ex:free presentation}, the values of $\mathsf{null}_\Mfunc$, $m_\varphi$, and $\rank_\Mfunc$ are shown, along with the values of their M\"obius inversions.
        }
    \label{tab:diagrams}
\end{table}

\begin{ex}
    Consider the persistence module $\Mfunc$ presented in Example~\ref{ex:module} and illustrated in Figure~\ref{fig:module}.
    Table~\ref{tab:diagrams} lists the values of the nullity function, birth-death function (computed with the free cover $\varphi: \Ffunc \Rightarrow \Mfunc$ defined in Example \ref{ex:free presentation}), and rank function, together with their M\"obius inversions.
    Observe  that $\partial \mathsf{null}_\Mfunc$ and $\partial m_\varphi$ coincide on off-diagonal intervals, as guaranteed by Proposition \ref{prop:ker and bd}.
\end{ex}

%%%%%%%%%%%%%%%%%%%%%%%%%%%%%%%%%%%%%%%%%%%%%%%%%%%%%%%%%%%
\subsection{The Fibered Barcode}
%%%%%%%%%%%%%%%%%%%%%%%%%%%%%%%%%%%%%%%%%%%%%%%%%%%%%%%%%%%

The fibered barcode \cite{landi,RIVET} captures information about a multiparameter persistence module $\Mfunc: [0,\infty)^n \to \Vec$ by restricting $\Mfunc$ to linear slices. Along a fixed line $\ell \subseteq [0,\infty)^n$ with nonnegative slope, the persistence module $\Mfunc |_\ell$ is a 1-parameter persistence module and so any of the usual methods of obtaining a persistence diagram can be implemented. The fibered barcode of $\Mfunc$ is the set of all persistence diagrams over each linear slice with nonnegative slope.

For a fixed line $\ell \subseteq [0,\infty)^n$ with nonnegative slope, let $\iota: [0,\infty) \mono [0,\infty)^n$ be the inclusion of $\ell$ into $[0,\infty)^n$. Since $\ell$ has nonnegative slope, $\iota$ is an order-preserving map. Moreover, $\iota$ has a left adjoint $\pi : [0,\infty)^n \to \ell$ defined by $\pi(x) = \max \{a \in \ell \mid \iota(a) \leq x \}$; see Figure \ref{fig: fibered-barcode}.

\begin{prop}\label{prop: fibered linear time}
    The persistence diagram of the 1-parameter persistence module $\Mfunc |_\ell$ is given by $\pi_\sharp (\partial \mathsf{null}_{\Mfunc})$.
\end{prop}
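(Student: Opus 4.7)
The plan is to recognize this statement as a direct application of functoriality (Proposition \ref{prop:functoriality}) to the Galois insertion $(\pi, \iota)$, so that no extra computation beyond unpacking definitions is needed.

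First I would verify that $(\pi, \iota) : [0,\infty)^n \leftrightarrows \ell$ is a Galois insertion in the sense of Definition \ref{def: cat of mods}. The adjunction is built into the definition of $\pi$, and the insertion property $\pi \circ \iota = \id_\ell$ is immediate: for $a \in \ell$, $\pi(\iota(a)) = \max\{b \in \ell \mid \iota(b) \leq \iota(a)\} = a$ because $\iota$ is an order-preserving injection. Since $\Mfunc|_\ell = \Mfunc \circ \iota$ by definition, the pair $(\pi, \iota)$ is exactly a morphism in $\Pmod$ from $\Mfunc$ to $\Mfunc|_\ell$.

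Next I would fix any free presentation $\varphi : \Ffunc \Rightarrow \Mfunc$ of $\Mfunc$ with birth-death function $m_\varphi$. By Lemma \ref{lem:free pullback}, the pullback $\psi := \varphi \circ \iota : \Ffunc \circ \iota \Rightarrow \Mfunc|_\ell$ is a free presentation of $\Mfunc|_\ell$, and the two birth-death functions are related by $\iota^{\sharp} \bar m_\varphi = n_\psi$, where $\bar \iota$ denotes the extension of $\iota$ to intervals. Applying Proposition \ref{prop: const RGCT} to the Galois connection $\bar \pi : \overline{[0,\infty)^n} \leftrightarrows \bar \ell : \bar \iota$ and the constructible function $m_\varphi$ yields
\[
\partial n_\psi \;=\; \partial \bigl(\bar\iota^{\sharp} m_\varphi\bigr) \;=\; \bar\pi_{\sharp}\bigl(\partial m_\varphi\bigr).
\]
Taking equivalence classes gives the desired identity, with $\pi_\sharp$ in the statement read as shorthand for $\bar\pi_\sharp$.

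I do not anticipate a real obstacle: the work has already been done by setting up $\Pmod$ and proving RGCT for constructible functions, so the content of the proposition is packaged into checking that $(\pi,\iota)$ is a Galois insertion and then invoking functoriality. The only point that might merit an extra line is explicitly flagging that $\pi$ extends to an order-preserving map $\bar\pi$ on intervals, which is automatic from the general extension lemma stated just after the definition of Galois connections.
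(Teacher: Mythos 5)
Your proof is correct and follows essentially the same route as the paper, which simply observes that $(\pi,\iota)$ is a morphism in $\Pmod$ from $\Mfunc$ to $\Mfunc|_\ell$ and invokes Proposition \ref{prop:functoriality}; your version merely unpacks functoriality into its constituent steps (Lemma \ref{lem:free pullback} together with Proposition \ref{prop: const RGCT}). One small caution: for $(\pi,\iota)$ to be a morphism in the sense of Definition \ref{def: cat of mods}, $\pi$ must be the \emph{left} adjoint of $\iota$, i.e.\ $\pi(x)=\min\{a\in\ell \mid x\leq \iota(a)\}$, whereas the $\max$ formula you quote (taken from the paper) is the formula for the right adjoint, so your verification of $\pi\circ\iota=\id_\ell$ and of the adjunction's handedness should be read with that correction in mind.
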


\begin{proof}
    We have that $\Mfunc|_\ell \cong \Mfunc \circ \iota$ which is a morphism from $\Mfunc$ to $\Mfunc_\ell$ and so, by RGCT (Theorem~\ref{thm:RGCT}), $\pi_\sharp (\partial \mathsf{null}_{\Mfunc}) =  \partial ( \mathsf{null}_{\Mfunc_\ell})$.
\end{proof}

\begin{figure}
    \centering
    \includegraphics[scale=.3]{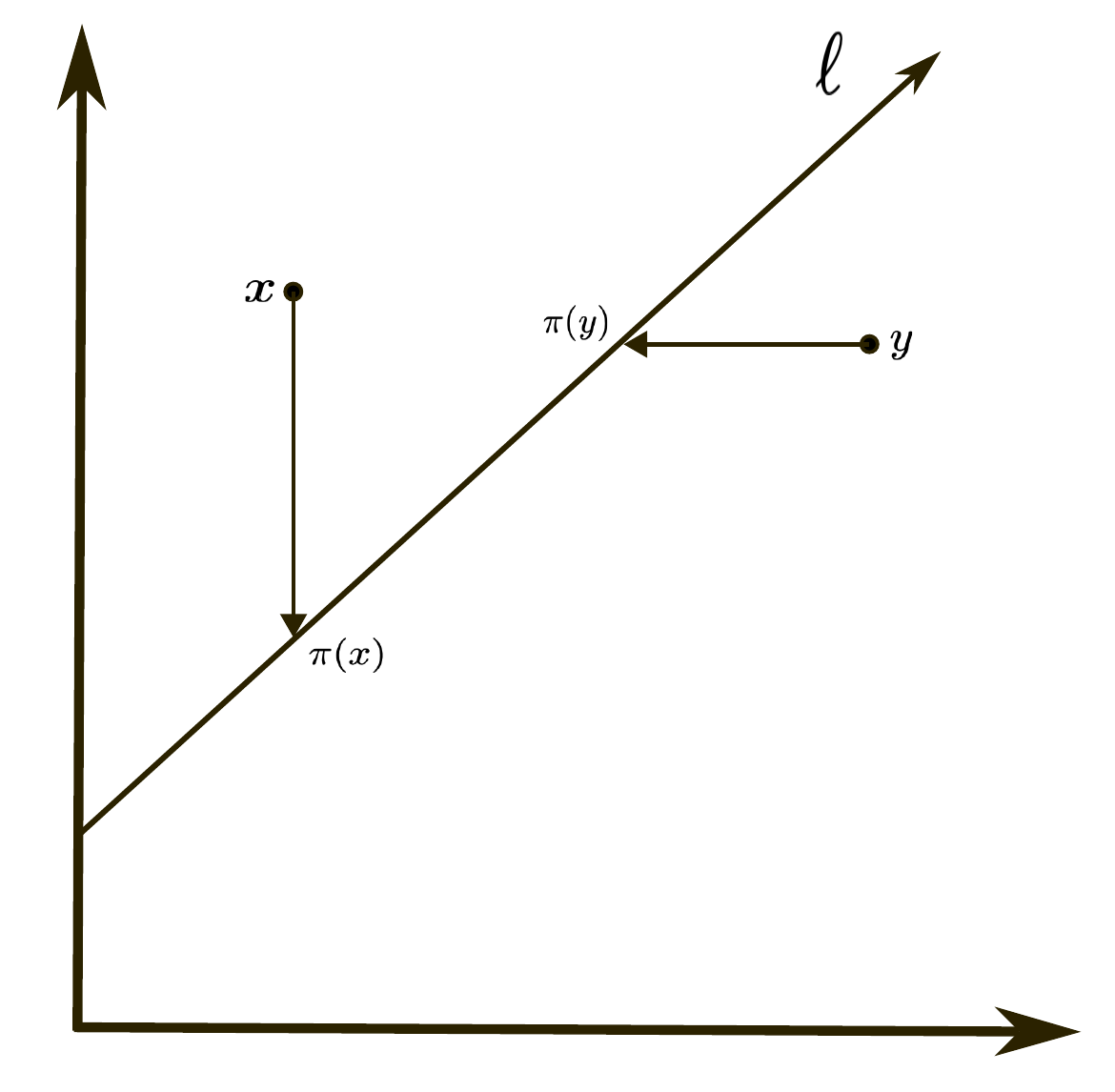}
    \caption{The inclusion $\iota$ of the line $\ell$ into $[0,\infty)^2$ and its left adjoint $\pi$.}
    \label{fig: fibered-barcode}
\end{figure}

\section{Discussion}

The introduction of Galois connections for constructing the category of persistence modules, along with the concept of Galois interleavings, provides a streamlined framework for a proof of classical bottleneck stability.
This language offers a fresh perspective on persistent homology and ties in seamlessly with the existing M\"obius inversion approach via Rota's Galois connection theorem.
However, generalizing these methods to establish a notion of bottleneck stability for multiparameter persistence diagrams presents challenges. Specifically, the presence of negative values in persistence diagrams creates a fundamental obstacle.

In classical settings, matchings are defined only between nonnegative persistence diagrams.
Extending this notion to allow matchings involving negative values results in the bottleneck distance collapsing to zero, as mentioned in Example~\ref{ex: neg-matchings} and Figure~\ref{fig: negative-matchings}.
This issue underscores a critical limitation in extending the bottleneck distance to multiparameter persistent homology.

\newpage

\bibliographystyle{alpha}
\bibliography{ref}{}

\newpage

\appendix

\section{M\"obius Inversion}\label{app: mobius}

Assume that $P$ is a finite poset. Every function $m: P \to \Z$ can be identified with a vector in $\Z^P$ in the natural way.
Under this identification, the M\"obius inversion of $m$ can be calculated with a matrix multiplication as follows.
Let $\zeta_P$ be the $P \times P$-indexed matrix defined by 
    \[(\zeta_P)(a,b) := 
        \begin{cases}
            1   &\text{if } a\leq b\\
            0   &\text{otherwise.}
        \end{cases}
    \]
Multiplication by the $\zeta_P$ matrix has the following effect on functions
    \begin{equation}\label{eq:zeta}
        (m\cdot \zeta_P) (b) = \sum_{a \in P} m(a) \zeta_P(a,b) = \sum_{a\leq b} m(a).
    \end{equation}
The M\"obius inversion formula can now be restated as $m = (\partial m)\cdot \zeta_P$.
The matrix $\zeta_P$ has an inverse matrix $\mu_P$ \cite{rota64}. Therefore the M\"obius inversion of $m$ is 
\begin{align*}
    \partial m &= \partial m \cdot (\zeta_P \cdot \mu_P ) \\
               &= (\partial m \cdot \zeta_P) \cdot \mu_P \\
               &= m \cdot \mu_P.
\end{align*}
This approach implies that M\"obius inversion is a linear map on the space $\Z^P$, a property we will make use of.

\begin{rmk}\label{rmk:rgct}
    The usual statement of Rota's Galois connection theorem \cite{greenemobius,rota64} says that if $f:P \leftrightarrows Q:g$ is a Galois connection, then for any $p \in P$ and $q \in Q$,
    \begin{equation}\label{eqn: old formulation of rgct}
        \sum_{\substack{v \in Q \\ g(v) = p}} \mu_Q (v,q) = \sum_{\substack{u \in P \\ f(u) = q}} \mu_P (p,u).
    \end{equation}
    This is equivalent to Theorem \ref{thm:RGCT} as follows. Fix a $p \in P$ and $q\in Q$. Let $\mathbf{1}_p : P \to \Z$ be the indicator function
    \[
        \mathbf{1}_p (u) :=
        \begin{cases}
            1   &\text{if } u=p\\
            0   &\text{otherwise.}
        \end{cases}
    \]
    Applying Theorem \ref{thm:RGCT} and evaluating both sides at $q$ gives
    \begin{align*}
        \big(\partial (g^\sharp \mathbf{1}_p)\big)(q) &= \big(f_\sharp \partial \mathbf{1}_p \big) (q)\\
        \big( (\mathbf{1}_p \circ g) \mu_Q \big) (q) &= \sum_{u \in f^{-1}(q)} \partial \mathbf{1}_p (u)\\
        \sum_{v \leq q} \mathbf{1}_p (g(v)) \mu_Q (v,q) &= \sum_{u \in f^{-1}(q)} \, \sum_{a \leq u} \mathbf{1}_p (a) \mu_P (a,u)\\
        \sum_{\substack{v \in Q \\ g(v) = p}} \mu_Q (v,q) &= \sum_{\substack{u \in P \\ f(u)=q}} \mu_P (p,u).
    \end{align*}
    This argument shows that Theorem \ref{thm:RGCT} applied to indicator functions implies Equation \ref{eqn: old formulation of rgct}. 

    The argument above also shows that the Equation \ref{eqn: old formulation of rgct} implies Theorem \ref{thm:RGCT} for the indicator functions. This is also noticed in \cite[Lemma 2.2]{sanchezhopf}. Now, let $m: P \to \Z$ be any function. Write $m = \sum_{p\in P} m(p) \mathbf{1}_p$. Then, using the linearity of pullback, pushforward, and M\"obius inversion, we obtain
    \begin{align*}
        \partial_Q \left(g^\sharp m\right) &= \partial_Q \left(g^\sharp \left( \sum_{p\in P} m(p) \mathbf{1}_p \right)\right) \\
        &= \sum_{p\in P} m(p) \partial_Q \left (g^\sharp \left(\mathbf{1}_p\right) \right) \\
        &= \sum_{p\in P} m(p) \left ( f_\sharp \partial \mathbf{1}_p \right ) \\
        &= f_\sharp \partial \left ( \sum_{p\in P} m(p) \mathbf{1}_p \right) \\
        &= f_\sharp \partial m.
    \end{align*}
    
    Hence, we conclude that the Equation \ref{eqn: old formulation of rgct} implies Theorem \ref{thm:RGCT}.
\end{rmk}

\begin{rmk}
    When the poset $P$ is locally finite, the (possibly infinity by infinity) matrix $\zeta_P$ still has the inverse matrix $\mu_P$. However, for a function $m : P \to \Z$, it may no longer hold that $m \cdot \mu_P$ is the M\"obius inversion of $m$. For example, consider $P = (\Z, \leq)$. Then, the matrix
    \[
    \mu_\Z (a,b) = \begin{cases}
        1 &\text{if } a = b \\
        -1 &\text{if } a =b-1 \\
        0 &\text{otherwise}
    \end{cases}
    \]
    satisfies that 
    \[
    \sum_{a\leq x \leq b} \mu_\Z(a,x) \zeta_\Z(x,b) =\sum_{a\leq x \leq b} \zeta_\Z(a,x) \mu_\Z(x,b) = \begin{cases}
        1 &\text{if } a=b \\
        0 &\text{otherwise.}
    \end{cases}
    \]
    That is, $\mu_\Z$ is the inverse of $\zeta_\Z$. Yet, for the function $m : \Z \to \Z$ given by $m(b) = b$, we obtain that $(m \cdot \mu_\Z) (a) = 1$ for all $a\in \Z$. And, it is easy to see that the sum $\sum_{a\leq b \in \Z} (m \cdot \mu_\Z) (a) $ diverges for all $b\in Z$. Thus, $m \cdot \mu_Z$ is not the M\"obius inversion of $m$. In fact, in this case, $m$ does not have a M\"obius inversion.
\end{rmk}

\begin{prop}\label{prop:relation diagrams}
    The persistence diagrams obtained via the nullity function and the rank function are related by the following formula
    \begin{align*}
        \partial_{\hat P} \rank_\Mfunc &= \big(f_\sharp \partial_{\bar P} \mathsf{null}_\Mfunc \big) \zeta_{\bar P} \mu_{\hat P} -\big(\partial_{\bar P} \mathsf{null}_\Mfunc \big) \zeta_{\bar P} \mu_{\hat P}
    \end{align*}
    where $f_\sharp$ is the pushforward of $ f: \bar P \to \bar P$ defined by $ f: (a,b) \mapsto (a,a)$ and $\zeta_{\bullet}$ and $\mu_{\bullet}$ are matrices as defined in Appendix~\ref{app: mobius}.
\end{prop}
\begin{proof}
    Without loss of generality, assume that $P$ has a maximum element $\infty$ such that $\Mfunc(\infty) = 0$.
    We begin by establishing a Galois connection on $\bar P$.
    Let $f , g : \bar P \to \bar P$ be defined by $f : (a,b) \mapsto (a,a)$ and $g : (a,b) \mapsto (a,\infty)$.
    This is a Galois connection since $f(a,b)=(a,a) \leq (c,d)$ if and only if $(a,b) \leq (c,\infty)=g(c,d)$ for any $(a,b), (c,d) \in \bar P$.\\
     Now from Remark \ref{rmk:rank/null}, we have $\rank_\Mfunc = \mathsf{null}_\Mfunc \circ g - \mathsf{null}_\Mfunc$.
     Multiplying both sides on the right by $\mu_{\hat P}$ gives
    \begin{align*}
        \partial_{\hat P} \rank_\Mfunc &= \big( \mathsf{null}_\Mfunc \circ g \big) \mu_{\hat P} - \mathsf{null}_\Mfunc \mu_{\hat P}\\
        &= \big( \mathsf{null}_\Mfunc \circ g \big) \mu_{\bar P} \, \zeta_{\bar P} \, \mu_{\hat P} - \mathsf{null}_\Mfunc \mu_{\bar P} \, \zeta_{\bar P} \, \mu_{\hat P} \\
        &= \big(f_\sharp \partial_{\bar P} \mathsf{null}_\Mfunc \big) \zeta_{\bar P} \mu_{\hat P} - \big( \partial_{\bar P} \mathsf{null}_\Mfunc \big) \zeta_{\bar P} \mu_{\hat P}.
    \end{align*}
\end{proof}

\section{Constructibility}\label{appendix: constructible}

In general, there is no unique set $S$ where a functor is $S$-constructible.
The following two results show that every constructible functor $\Mfunc : P \to \Vec$ has smallest (under the containment order) set $S_{\Mfunc}$ such that the functor is $S_\Mfunc$-constructible.

\begin{lem}\label{lem: int-of-const-sets}
    Suppose a constructible functor $\Mfunc: P \to \Vec$ is both $S_0$-constructible and $S_1$-constructible.
    Then $\Mfunc$ is $S_0 \cap S_1$-constructible.
\end{lem}
\begin{proof}

    Let $c_0,c_1:P \to P$ be the co-closure operators with images $S_0$ and let $S_1$ respectively.
    For $n\in \N$, consider the co-closure operators $C_{2n} := (c_1 \circ c_0) \circ (c_1 \circ c_0) \circ \cdots \circ (c_1 \circ c_0) = (c_1 \circ c_0)^n$, and $C_{2n+1} := c_0 \circ (c_1 \circ c_0)^n$. Since both $c_0$ and $c_1$ are deflationary and their images $S_0$ and $S_1$ are finite, the sequence $\{ C_n \}_{n\in \N}$ must stabilize. That is, there exist $N\in \N$ such that for all $N_0\geq N$ we have that $C_{N_0} = C_N$. Let $C := C_{N}$ and, without loss of generality, assume that $N = 2n$ for some $n\in \N$.
    We will show that $\image(C) = S_0 \cap S_1$ and that $\Mfunc \circ C \cong \Mfunc$.
    Any element in the image of $C$ must be in the image of both $c_0$ and $c_1$ as
    \begin{align*}
        (c_1 \circ c_0) \circ (c_1 \circ c_0) \circ \cdots \circ (c_1 \circ c_0) &= (c_1 \circ c_0)^n = C_{2n} = C_N \\
        &= C_{N+1} \\
        &= c_0 \circ (c_1 \circ c_0)^n
    \end{align*}
    Conversely, if an element $x \in P$ is in the image of both $c_0$ and $c_1$ then by the idempotent property of co-closure operators, $c_0(x)=c_1(x)=x$ and so $x$ must be in the image of $C$.

    Now, to see that $\Mfunc \circ C \cong \Mfunc$, we have the following
    \begin{align*}
        \Mfunc \circ C &= \Mfunc \circ C_N\\
        &= \Mfunc \circ (c_1 \circ c_0) \circ (c_1 \circ c_0)^{n-1} \\
        &= (\Mfunc \circ c_1) \circ \left(c_0 \circ (c_1 \circ c_0)^{n-1}\right) \\
        &= (\Mfunc \circ c_1) \circ C_{N-1} \\
        &\cong \Mfunc  \circ C_{N-1}\\
        &\hspace{2.5cm} \vdots\\
        &\cong \Mfunc \circ C_0 \\
        &= \Mfunc \circ id_P \\
        &= \Mfunc.
    \end{align*}
    This concludes the proof.
\end{proof}

\begin{prop}\label{prop: smallest-const-set}
    Let $\Mfunc: P \to \Vec$ be a constructible functor.
    Then there exists a unique smallest set $S_\Mfunc \subseteq P$ such that $\Mfunc$ is $S_\Mfunc$-constructible.
\end{prop}
\begin{proof}
    By induction, Lemma \ref{lem: int-of-const-sets} extends to say that finite intersections of constructible sets are constructible sets.
    Let $\mathcal S = \{S_i\}_{i\in I}$ be the set of all possible constructible sets for $\Mfunc$ and $S_\Mfunc := \bigcap_{i \in I} S_i$.
    In general, $\mathcal{S}$ will not be finite.
    Choose some $S_0 \in \mathcal S$.
    We have that
    \[
        S_\Mfunc = \bigcap_{i \in I} S_i = \bigcap_{i \in I} \big( S_i \cap S_0 \big)
    \]
    and, since $S_0$ is a finite set, there are only finitely many possibilities for $S_i \cap S_0$.
    Therefore there are only finitely many distinct terms in the intersection and so we can apply Lemma \ref{lem: int-of-const-sets} to deduce that $\Mfunc$ is $S_\Mfunc$-constructible. 
\end{proof}

\end{document}